\documentclass[12pt]{amsart}
\usepackage{amscd, amssymb}
\input{xy}
\xyoption{all}

\newtheorem{Thm}{Theorem}[subsection]
\newtheorem{Conj}[Thm]{Conjecture}
\newtheorem{Prop}[Thm]{Proposition}
\newtheorem{Def}[Thm]{Definition}
\newtheorem{Def/Thm}[Thm]{Definition/Theorem}
\newtheorem{Cor}[Thm]{Corollary}
\newtheorem{Lemma}[Thm]{Lemma}

\theoremstyle{remark}
\newtheorem{Rmk}[Thm]{Remark}

\newtheorem{EG}[Thm]{Example}

\numberwithin{equation}{subsection}

\newcommand{\ot }{\otimes}
\newcommand{\ra }{\rightarrow}

\newcommand{\lra }{\longrightarrow}

\newcommand{\Hom }{{\mathrm{Hom}}}

\newcommand{\Spec}{{\mathrm{Spec}}}

\newcommand{\Pic}{{\mathrm{Pic}}}

\newcommand{\cO}{{\mathcal{O}}}

\newcommand{\cL}{{\mathcal{L}}}
\newcommand{\cE}{{\mathcal{E}}}
\newcommand{\cF}{{\mathcal{F}}}

\newcommand{\cP}{{\mathcal{P}}}
\newcommand{\cQ}{{\mathcal{Q}}}
\newcommand{\cV}{{\mathcal{V}}}
\newcommand{\cC}{{\mathcal{C}}}

\newcommand{\cT}{{\mathcal{T}}}
\newcommand{\cX}{{\mathcal{X}}}

\newcommand{\bT}{{\bf T}}


\newcommand{\PP }{{\mathbb P}}
\newcommand{\GG }{{\mathbb G}}
\newcommand{\QQ }{{\mathbb Q}}
\newcommand{\CC }{{\mathbb C}}
\newcommand{\ZZ }{{\mathbb Z}}
\newcommand{\RR }{{\mathbb R}}



\newcommand{\Mgk}{\overline{M}_{g,k}}
\newcommand{\fMgk}{\mathfrak{M}_{g,k}}

\newcommand{\cTX}{\mathcal{T}_{g,k}(X_\Sigma,\beta)}
\newcommand{\cTXP}{\cT_{0,k}(X_\Sigma,\beta; \PP^1)}

\newcommand{\fS}{\mathfrak{S}}

\begin{document}
\title{Moduli stacks of stable toric quasimaps}

\begin{abstract} We construct new ``virtually smooth" modular compactifications of spaces of
maps from nonsingular curves to smooth projective toric varieties. They generalize Givental's
compactifications, when the complex 
structure of the curve is allowed to vary and markings are included, and are
the toric counterpart of the moduli spaces of stable quotients introduced by Marian, Oprea, and Pandharipande
to compactify spaces of maps to Grassmannians. A brief discussion of the resulting invariants and their
(conjectural) relation with Gromov-Witten theory is also included.
\end{abstract}
\subjclass[2000]{14D20, 14D23, 14M25, 14N35}

\author{Ionu\c t Ciocan-Fontanine}
\noindent\address{School of Mathematics, University of Minnesota, 206 Church St. SE,
Minneapolis MN, 55455, and\hfill
\newline \indent School of Mathematics, Korea Institute for Advanced Study,
87 Hoegiro, Dongdaemun-gu, Seoul, 130-722, Korea}
\email{ciocan@math.umn.edu}

\author{Bumsig Kim}
\address{School of Mathematics, Korea Institute for Advanced Study,
87 Hoegiro, Dongdaemun-gu, Seoul, 130-722, Korea}
\email{bumsig@kias.re.kr}

\maketitle

\section{Introduction} The Gromov-Witten theory of smooth toric varieties, 
and that of complete intersections of ample divisors in them, has been extensively studied. 

The genus zero theory over the small parameter space has a completely determined answer, expressed
in terms of Givental's {\it $I$-functions}. This
is shown in many cases in the original work \cite{G}; in full generality, it follows from further results
in \cite{CG}, \cite{Iritani}, see also \cite{Brown}.

The $I$-function of a smooth projective toric
variety $X_\Sigma$ (associated to complete nonsingular fan $\Sigma$) was introduced by Givental in \cite{G} in
terms of ``toric compactifications" of the moduli spaces of maps from $\PP^1$ to $X_\Sigma$.
These compactifications, also introduced and studied in \cite{MP} for the same purpose of understanding
Gromov-Witten invariants of $X_\Sigma$, can be viewed as moduli spaces of {\it rational} maps from $\PP^1$
to $X_\Sigma$. The maps corresponding to boundary points
are allowed to have certain kind of {\it base-points}, depending on the toric data encoded by $\Sigma$. 
The compactifications make sense with $\PP^1$ replaced by any fixed smooth proper curve $C$ and are
projective schemes. When the target toric variety
is the projective space $\PP^n$, they were considered earlier by Drinfeld and named spaces of {\it quasimaps}.
We will call them toric quasimaps for general targets $X_\Sigma$.

Our goal in this paper is to extend this construction when the curve is not fixed in moduli, and also in the presence of markings. In other words, we seek
a {\it relative} compactification over the moduli space $\Mgk$ of stable curves. Furthermore, we want the compactification to be ``virtually smooth", that is,
to carry a perfect obstruction theory
(and hence a virtual class) which over the locus corresponding to honest maps coincides with the usual obstruction theory. 
For these to hold, it turns out that the appropriate condition to require is that the base points do not occur at markings or nodes. This is analogous
to the condition on {\it stable quotients} imposed in the paper \cite{MOP}, which was one of the major sources of inspiration for our work.
The resulting moduli stack will be called the {\it moduli space of stable toric quasimaps} and we prove in \S 3-5 that it is a proper Deligne-Mumford stack
of finite type with a canonical perfect obstruction theory.
One sees immediately that in the case of $\PP^n$, which is both a Grassmannian and a toric variety, the moduli space of \cite{MOP} and
ours coincide. 

Usually, when studying their deformation theory, 
moduli spaces of maps from nodal curves are viewed as stacks over the Artin stack $\fMgk$ of prestable curves.
The innovation in this paper is a change of perspective, based on the easy but important observation that obstructions
to deforming stable toric quasimaps are in fact obstructions to deforming {\it sections} of line bundles over the underlying curves, while
the deformations of the curves together with the line bundles on them are unobstructed. 
As a consequence, it is more natural to
view our moduli spaces as stacks over (products of) Picard stacks over $\fMgk$. The new point of view gives rise to a very
easy and transparent construction of the moduli spaces with their virtual classes: they are all presented as
zero loci of sections of vector bundles on smooth Deligne-Mumford stacks, see \S 3.2.
Furthermore, as we remark there, this new perspective applies equally well for stable maps
with target a toric variety, giving
a similar description of the moduli spaces and virtual classes\footnote{After our paper was posted on the arXiv, 
Yi Hu kindly informed us that a similar observation in the case of stable maps to $\PP^n$ was used earlier in his work \cite{HL} with Jun Li
to give a {\it local} description of the moduli space. We thank him for bringing this to our attention.}.   

Finally, we note that the change of perspective leads to generalizations. 
In a sequel \cite{CKM} to this paper, it will be shown that both the moduli of stable quotients and the moduli of stable toric quasimaps are particular instances
of a more general construction of virtually smooth compactifications (relative over $\Mgk$) of moduli spaces of maps from curves
to a class of GIT quotients.

The new moduli spaces are used to define a system of quasimap integrals, which are expected to be related to
(but in general different from) the Gromov-Witten invariants of the toric variety. These integrals give a new structure
of Cohomological Field Theory on $H^*(X_\Sigma)$. 
The last two sections of the paper touch briefly on the subject, but much of the study is left for future work.
In particular, the issue of defining a theory for complete intersections in toric varieties, as suggested in \cite{MOP},
fits better in the general framework of \cite{CKM} and will be treated there. Here we content ourselves to
showing how toric quasimaps in genus zero can be used to extend Givental's $I$-function to the ``big" parameter
space and to formulate a conjecture relating it to the big $J$-function from Gromov-Witten theory.
We give a short proof of this conjecture for (products of) projective spaces.

\subsection{Acknowledgments} During the preparation of the paper
we have benefited from conversations with Davesh Maulik and Rahul Pandharipande.
The material about the big $I$-function in \S \ref{big-I-fcn} is joint work with Rahul Pandharipande and we
are indebted to him for allowing us to include it in this paper. We also thank Chanzheng Li for
pointing out an inaccuracy in earlier version.
The research presented here, as well as the writing of the paper, were carried out
at the Korea Institute for Advanced Study in Summer 2009. Ciocan-Fontanine thanks KIAS for financial support, excellent
working conditions, and an inspiring research environment. 
Partial support for the research of Ciocan-Fontanine under the NSF grant DMS-0702871
and for the research of Kim under the grant KRF-2007-341-C00006 is gratefully acknowledged.

\section{Reminder on complex projective smooth toric varieties}

\subsection{Notations and general facts} Throughout the paper we work over the base field $\CC$.
Let $M\cong \ZZ^n$ be a $n$-dimensional lattice, let $N$ be its dual lattice and let $\Sigma\subset N_{\RR}$ be a complete nonsingular fan, with associated
$n$-dimensional smooth projective toric variety $X_\Sigma$. For every $0\leq i\leq n$ we denote by $\Sigma(i)$ the collection of $i$-dimensional
cones in $\Sigma$; we'll also write $\Sigma_{\mathrm {max}}$ for $\Sigma(n)$.

As usual, we identify a $1$-dimensional cone $\rho\in \Sigma(1)$ with its integral generator. Each such $\rho$ determines a Weil divisor $D_\rho$ on $X_\Sigma$.
We will write $\ZZ^{\Sigma(1)}$ for the free abelian group generated by the $D_\rho$'s. Let $l=|\Sigma(1)|$ and put $r=l-n$. 
There is an exact sequence
\begin{equation} 0 \ra M \ra \ZZ ^{\Sigma {(1)}} \ra \Pic (X_\Sigma ) \ra 0 \label{exact seq}\end{equation}
where the first map is given by $m\mapsto \sum _\rho \langle m,\rho \rangle D_\rho $,
while the second map is the obvious one. Since $\Pic (X_\Sigma )$ is torsion free, it 
is isomorphic to $\ZZ ^r$. The second map is
given by an integral $r\times l$ matrix $A=(a_{i\rho})$ of rank $r$, once we choose an integral basis $\{{\mathcal L}_1,\dots ,{\mathcal L}_r\}$ of 
${\mathrm {Pic}}(X_\Sigma)$. 
Applying ${\mathrm {Hom}}_\ZZ(-,\CC^*)$ we obtain the exact sequence
\begin{equation}\label{exact seq2}
1\ra \GG\ra (\CC^*)^{\Sigma(1)}\ra N\otimes \CC^*\ra 1.
\end{equation}
The group $\GG$ comes with an identification $\GG\cong (\CC^*)^r$ 
and under this identification, it
acts on the affine space $\CC^{\Sigma(1)}$ with weights given by the (transpose of the) matrix $A$. Explicitly,
if we denote by $z_\rho$ the coordinates in
$\CC^{\Sigma(1)}$, then ${\bf t}:=(t_1,\dots ,t_r)\in (\CC^*)^r$ acts by
$${\bf t}\cdot (z_{\rho_1},\dots ,z_{\rho_l})=({\bf t}^{{\bf a}_1}z_{\rho_1},\dots ,{\bf t}^{{\bf a}_l}z_{\rho_l}),$$
where ${\bf a}_j$ denotes the $j^{\mathrm {th}}$ column of the matrix $A$ and 
$${\bf t}^{{\bf a}_j}=t_1^{a_{1\rho_j}}t_2^{a_{2\rho_j}}\dots t_r^{a_{r\rho_j}}.$$

Associated to the fan $\Sigma$, there is an open $\GG$-invariant subset 
$$U(\Sigma) =\CC^{\Sigma(1)}\setminus Z(\Sigma)  $$
such that 
$$U(\Sigma)\lra U(\Sigma)/\GG=X_\Sigma$$
is a principal $\GG$-bundle, see \cite{Cox2}. If we denote by $z_\rho$ the coordinates in
$\CC^{\Sigma(1)}$, then the equations of $Z(\Sigma)$ are 
indexed by $\Sigma_{\mathrm {max}}$:
$$Z(\Sigma)=\{(z_\rho)\in \CC^{\Sigma(1)}\; |\; \prod_{\rho\not\subset\sigma}z_\rho=0\}.$$
It follows that $Z(\Sigma)$ is a union of linear subspaces in $\CC^{\Sigma(1)}$.
The toric divisors $D_\rho$ are the images of the coordinate hyperplanes $\{ z_\rho=0\}$ in $\CC^{\Sigma(1)}$ under the quotient
map.

This quotient construction of a smooth projective toric variety can also be interpreted in the usual
framework of GIT: linearizations of the trivial line bundle on $\CC^{\Sigma(1)}$ for which the semistable
and stable loci are equal and coincide with $U(\Sigma)$ correspond to the interior of the ample cone of $X_\Sigma$
(see e.g., \cite{Dol}, Chapter 12).

The following is well-known:

\begin{Lemma}\label{sigma basis}
Let $\sigma$ be a maximal cone of $\Sigma$. Then $\{ \cO(D_\rho)\;\; |\;\; \rho\not\subset\sigma\}$
is a $\ZZ$-basis of $\Pic(X_\Sigma)$. 
Furthermore, if $L$ is an ample line bundle on $X_\Sigma$, then the coefficients of $L$ in this
basis are positive.
\end{Lemma}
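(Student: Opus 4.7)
The plan is to deduce both claims directly from the exact sequence (\ref{exact seq}), combined with the standard correspondence between torus-invariant Cartier divisors and piecewise linear support functions on $\Sigma$.

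For the basis claim, I would exploit nonsingularity of $\sigma$: its integral generators $\rho_1,\dots,\rho_n$ form a $\ZZ$-basis of $N$, so one has a dual basis $m_1,\dots,m_n\in M$. Feeding $m_i$ into the first map of (\ref{exact seq}) produces the relation
\[
D_{\rho_i}+\sum_{\rho\not\subset\sigma}\langle m_i,\rho\rangle D_\rho\sim 0
\]
in $\Pic(X_\Sigma)$, which expresses each class $\cO(D_{\rho_i})$ as a $\ZZ$-linear combination of the classes $\{\cO(D_\rho) : \rho\not\subset\sigma\}$. Since $\Pic(X_\Sigma)$ is generated by all $\cO(D_\rho)$, it is therefore generated by the $l-n=r$ classes with $\rho\not\subset\sigma$; $r$ generators of a free abelian group of rank $r$ must form a $\ZZ$-basis.

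For the positivity claim, I would use the basis just produced to pick a preferred torus-invariant representative of $L$, namely $\sum_{\rho\not\subset\sigma}c_\rho D_\rho$, in which the coefficient at each $D_{\rho_i}$ with $\rho_i\subset\sigma$ is zero. The associated piecewise linear support function $\phi_L$ on $|\Sigma|$ then vanishes at each $\rho_i$, and since $\rho_1,\dots,\rho_n$ form a $\ZZ$-basis of $N$ we conclude $\phi_L|_\sigma\equiv 0$, that is $m_\sigma=0$ in the usual notation. I then invoke the classical toric criterion that a torus-invariant Cartier divisor is ample iff its support function is strictly convex with respect to $\Sigma$ (see e.g.\ \cite{Dol}, Chapter 12, or any standard reference on toric varieties). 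Applied on the cone $\sigma$, this produces a strict inequality between $\phi_L(\rho)$ and $\langle m_\sigma,\rho\rangle=0$ for every ray $\rho\not\subset\sigma$; the sign convention $\phi_L(\rho)=-c_\rho$ then yields $c_\rho>0$.

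The only non-elementary input is the support-function criterion for ampleness, which is folklore in toric geometry and I would cite rather than reprove. A clean alternative, more in the spirit of the GIT description recalled just above the lemma, is the following: under the identification of $\Pic(X_\Sigma)$ with the character group of $\GG$, the ample cone is characterized by Hilbert--Mumford stability at the torus-fixed points of $X_\Sigma$, and for our fixed $\sigma$ this stability condition reads precisely that the coefficients of $L$ in the basis $\{\cO(D_\rho)\,|\,\rho\not\subset\sigma\}$ are strictly positive. Either route leads to the same conclusion; no hard step is anticipated.
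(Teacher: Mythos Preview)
Your proposal is correct. The paper's own proof consists entirely of two literature citations (Ewald for the basis statement, Kresch for positivity), so your write-up is not so much a different route as a fleshing out of the standard argument those references contain. Your treatment of the first claim via the dual basis of $M$ determined by the rays of $\sigma$ is exactly the usual proof (and is what underlies the cited lemma in Ewald); your treatment of the second claim via the support-function/strict-convexity criterion with the normalization $m_\sigma=0$ is likewise the standard toric argument. The GIT/Hilbert--Mumford alternative you sketch is also valid and ties in nicely with the quotient description recalled just before the lemma, though it is not needed. In short: nothing to fix; your version simply supplies the details the paper delegates to the literature.
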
 

\begin{proof} The first assertion follows from the regularity of the fan $\Sigma$, see \cite{Ewald}, Lemma VII.5.3 for a more
general statement. For the second assertion, see e.g. \cite{Kresch}, \S 2.2.
\end{proof}

\subsection{Maps to $X_\Sigma$\label{maps}} As shown by Cox in \cite{C}, the toric variety $X_\Sigma$ can be viewed as a fine moduli space. 
More precisely, for any scheme $Y$, Cox defines a {\it $\Sigma$-collection on $Y$} to be the following data:
\begin {itemize}
\item for each $\rho\in\Sigma (1)$, a line bundle $L_\rho$ on $Y$ {\it and} a section $u_\rho\in \Gamma(Y,L_\rho)$

\item  for each $m\in M$, a trivialization ${\phi}_m:\otimes_{\rho\in\Sigma(1)}L_\rho^{\otimes\langle \rho,m\rangle}\longrightarrow{\mathcal O}_Y$

subject to the conditions

\begin{equation}\label{comp}  (compatibility)\;\; \phi_{m}\otimes\phi_{m'}=\phi_{m+m'},\;\; \forall\;\; m,m'\in M
\end{equation}

\begin{equation}\label{nd} \begin{split} & (nondegeneracy)\;\; {\rm for\; each}\; y\in Y\; 
{\rm there\; exists\; a}\\ & {\rm maximal\; cone}\; \sigma\in\Sigma_{\rm max}\;
{\rm such\; that}\; u_\rho(y)\neq 0,\;\;\; \forall \rho\not\subset \sigma .\end{split}
\end{equation}
\end{itemize}

The functor ${\mathcal C}: \mathrm{Sch}\longrightarrow \mathrm{Sets}$ that $X_\Sigma$ represents is defined on objects by
$$ {\mathcal C} (Y) =\{ {\rm isomorphism}\; {\rm classes\; of}\; \Sigma-{\rm collections\; on}\; Y\},$$
while on morphisms it acts by pulling back $\Sigma$-collections.

The universal family on $X_\Sigma$ consists of the line bundles ${\mathcal O}(D_\rho)$ together with the canonical sections $v_\rho$
such that $D_\rho$ is the zero locus of $v_\rho$, and the isomorphisms
$${\mathcal O}(\sum_\rho\langle\rho,m\rangle D_\rho)\lra{\mathcal O}_{X_\Sigma}$$ induced for each $m\in M$ by the corresponding character $\chi^m$.

\section{Moduli of Stable Toric Quasimaps to $X_\Sigma$}

It follows from \S \ref{maps}
that the moduli space $Mor(Y,X_\Sigma)$ of maps from $Y$ to $X_\Sigma$ is identified with the moduli space of $\Sigma$-collections on $Y$, and one
may try to compactify it by allowing some degeneracies of the collections. When $Y=\PP^1$, 
this is precisely the strategy 
employed by Givental \cite{G} and by Morrison and Plesser \cite{MP} to obtain the ``toric compactifications" (or {\it linear sigma models}) they used to study the 
genus zero Mirror Conjecture. The degenerate maps are allowed to acquire some ``base points". Precisely, the nondegeneracy condition (\ref{nd}) is imposed
at all but finitely many points of the curve. 

In this section we extend the definition to (families of) nodal marked curves, introduce
the corresponding
moduli space, and show via a direct geometric construction
that it is a Deligne-Mumford stack of finite type with a perfect obstruction theory.
Properness
will be established in section \S \ref{properness}, while a discussion of the naturality of the obstruction theory is deferred to \S \ref{virt}.

\subsection{Definition and first properties}

Let $X_\Sigma$ be a projective smooth toric variety with a polarization 
${\mathcal O}_{X_\Sigma}(1) = \otimes_\rho  {\mathcal O}_{X_\Sigma}(D_\rho)^{\alpha_\rho}$.
Fix a genus $g\geq 0$, an integer $k\geq 0$,
and integers $d_\rho,\; \rho\in\Sigma(1)$.

\begin{Def} \label{stable qmap} A {\rm stable (pointed) toric quasimap} consists of the data 
$$( (C,p_1,\dots ,p_k), \{ L_\rho\}_{\rho\in \Sigma(1)}, \{ u_\rho\}_{\rho\in\Sigma(1)},  \{\phi _m\}_{m\in M}),$$ 
where

\begin{itemize}

\item $(C,p_1,\dots ,p_k)$ is a connected, at most nodal, projective curve of genus $g$ with $k$ distinct nonsingular marked points,

\item $L_\rho$ are line bundles on $C$, of degree $d_\rho$,

\item $\phi _m : \ot _\rho L_\rho ^{\langle\rho,  m\rangle} \ra \mathcal{O}_C$ are isomorphisms, satisfying the compatibility conditions (\ref{comp}),

\item $u_\rho \in \Gamma (C, L_\rho )$ are global sections,

\end{itemize}

satisfying

\begin{enumerate}

\item (nondegeneracy) there is a finite (possibly empty) set of nonsingular points $B\subset C$,
disjoint from the markings on $C$,
such that for every $y\in C\setminus B$,  there exists a maximal cone $\sigma \in \Sigma _{\mathrm{max}}$ 
with $u_\rho (y)\ne 0$, $\forall \rho \not\subset \sigma$,

\item (stability) $\omega _C(p_1+\dots +p_k) \ot {\mathcal L}^\epsilon $ is ample for every rational number $\epsilon > 0$, where
${\mathcal L}:=\otimes_{\rho\in\Sigma(1) } L_\rho^{\ot \alpha_\rho}$. 

\end{enumerate}

\end{Def}

\begin{Def} \label{isomorphism} An isomorphism between two stable toric quasimaps 
$$( (C,p_1,\dots ,p_k), \{ L_\rho\}_{\rho\in \Sigma(1)}, \{ u_\rho\}_{\rho\in\Sigma(1)},  \{\phi _m\}_{m\in M}),$$ and
$$( (C',p_1',\dots ,p_k'), \{ L_\rho '\}_{\rho\in \Sigma(1)}, \{ u_\rho '\}_{\rho\in\Sigma(1)},  \{\phi _m'\}_{m\in M}),$$
consists of an isomorphism $f:C\lra C'$ of the underlying curves, together with isomorphisms $\theta_\rho : L_\rho\lra f^*L_\rho '$,
such that the markings, the sections $u_\rho$, and the trivializations are preserved:
$$f(p_j)=p_j',\;\;\; \theta_\rho(u_\rho )=f^*(u_\rho '), \;\;\; \phi_m=f^*(\phi_m')\circ (\otimes_\rho \theta_\rho^{\langle\rho,m\rangle})
$$
\end{Def}

Stability is equivalent to
the following two conditions:
\begin{itemize}
\item The underlying curve $C$ cannot have any rational
components containing fewer than two special points (nodes or markings).
\item On every rational component with exactly two special points, or on every elliptic component
with no special points, the line bundle ${\mathcal L}$ must have positive degree.
\end{itemize}
In particular, stability imposes the inequality $2g-2+k\geq 0$, which we will assume for the rest of this section.

The points of the subset  $B\subset C$ introduced in the
nondegeneracy condition of Definition \ref{stable qmap} will be called the {\it base-points} of the quasimap. By definition,
a stable toric quasimap has at most finitely many base-points, all nonsingular and nonmarked.

As defined above, the notion of stable toric quasimap depends a priori both on the fan $\Sigma$ used to present $X_\Sigma$ as a toric variety, and 
on the chosen polarization ${\mathcal O}_{X_\Sigma}(1)$. 

Given a stable toric quasimap,
there is an induced homomorphism from $\Pic (X_\Sigma)$ to $\Pic (C)$, given by ${\mathcal O}_{X_\Sigma}(D_\rho)\mapsto L_\rho$, which in turn defines
via Poincar\' e duality an integral curve class $\beta\in H_2(X_\Sigma,\ZZ)$. Precisely, $\beta$ is determined by
$$\beta \cdot D_\rho=d_\rho,\;\; \rho\in\Sigma(1).$$ 
Furthermore, the same argument assigns a
homology class $\beta_{C'}$ (determined by $\beta_{C'} \cdot D_\rho= \deg( L_\rho|_{C'}$)
to each irreducible component $C'$ of $C$, with $\beta=\sum_{C'}\beta_{C'}$.

We will say that the toric quasimap 
is {\it of class $\beta$}.

\begin{Lemma}\label{indep} The homology classes $\beta_{C'}$ 
are all effective. In particular, if a toric quasimap is stable for
one polarization ${\mathcal O}_{X_\Sigma}(1)$, then it is so for any other polarization. 
\end{Lemma}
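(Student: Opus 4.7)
The plan is to verify that on each irreducible component $C'$ of $C$ the class $\beta_{C'}$ pairs non-negatively with every ample class on $X_\Sigma$; by Kleiman's criterion this places $\beta_{C'}$ in the closed Mori cone of the smooth projective toric variety $X_\Sigma$, which is generated by classes of torus-invariant rational curves, so $\beta_{C'}$ is effective. The ``in particular'' statement will then follow from the equivalent form of stability recalled after Definition \ref{stable qmap}, which concerns positivity of $\deg \cL$ on rational components with exactly two special points and on elliptic components with no special points.

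The key step is to translate the pointwise nondegeneracy condition into a statement attached to a single maximal cone that works generically on $C'$. For each $\sigma \in \Sigma_{\mathrm{max}}$, the locus
\[
U_\sigma := \{ y \in C' \mid u_\rho(y) \neq 0 \text{ for all } \rho \not\subset \sigma \}
\]
is open in $C'$, and by the nondegeneracy axiom $C' \setminus B \subset \bigcup_{\sigma \in \Sigma_{\mathrm{max}}} U_\sigma$, where $B$ is the finite base-point set. Since $C'\setminus B$ is nonempty, some $U_\sigma$ is nonempty, and irreducibility of $C'$ forces such a $U_\sigma$ to be dense in $C'$. For this $\sigma$, each section $u_\rho|_{C'}$ with $\rho \not\subset \sigma$ is not identically zero, so
\[
\beta_{C'} \cdot D_\rho \;=\; \deg(L_\rho|_{C'}) \;\geq\; 0 \qquad \text{for all } \rho \not\subset \sigma.
\]

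Now given any ample $L$ on $X_\Sigma$, Lemma \ref{sigma basis} writes $L = \sum_{\rho \not\subset \sigma} c_\rho \cO(D_\rho)$ with $c_\rho > 0$, so pairing with $\beta_{C'}$ gives $\beta_{C'}\cdot L \geq 0$, proving effectiveness. For the corollary, let $\cO_{X_\Sigma}(1)'$ be another polarization with associated line bundle $\cL'$ on $C$. On a rational component with exactly two special points, or on an elliptic component with no special points, stability with respect to $\cO_{X_\Sigma}(1)$ yields $\beta_{C'} \cdot \cO_{X_\Sigma}(1) = \deg(\cL|_{C'}) > 0$, so $\beta_{C'}$ is a \emph{nonzero} effective class; pairing it against the ample class $\cO_{X_\Sigma}(1)'$ then gives $\deg(\cL'|_{C'}) = \beta_{C'}\cdot \cO_{X_\Sigma}(1)' > 0$, as required.

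The only delicate point is the irreducibility argument that upgrades pointwise nondegeneracy to the global nonvanishing of $u_\rho|_{C'}$ for a uniform choice of $\sigma$; once this is in hand, the rest is a formal application of Lemma \ref{sigma basis} combined with the fact that nonzero elements of the Mori cone pair strictly positively with ample classes.
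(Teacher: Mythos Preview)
Your argument is correct and follows essentially the same route as the paper: pick a maximal cone $\sigma$ for which the sections $u_\rho$ with $\rho\not\subset\sigma$ are generically nonzero on $C'$, then use Lemma~\ref{sigma basis} to express any ample class as a positive combination of the $D_\rho$ with $\rho\not\subset\sigma$. The one difference is in how strict positivity is obtained for the ``in particular'' clause: the paper argues directly that if $\beta_{C'}\cdot P=0$ for some ample $P$ then the section $\prod_{\rho\not\subset\sigma}u_\rho^{b_\rho}$ is nowhere-vanishing on $C'$, forcing each $L_\rho|_{C'}$ (for $\rho\not\subset\sigma$, hence for all $\rho$ via Lemma~\ref{sigma basis}) to be trivial and $\beta_{C'}=0$; you instead invoke Kleiman's criterion and the polyhedral Mori cone of a toric variety. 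Both work; the paper's version is slightly more self-contained, while yours cleanly separates the effectiveness statement from the strict-positivity step.
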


\begin{proof} We need to show that 
$$\int_{\beta_{C'}}c_1(P)\geq 0$$
for any ample line bundle $P$ on $X_\Sigma$, and equality holds if and only if $\beta_{C'}=0$.
 
By the finiteness of the set of base points, there exists $p\in C'$ and a maximal cone $\sigma\in \Sigma(n)$ such
that the sections $u_\rho, \rho\not\subset\sigma$ are nonvanishing at $p$. Let $P$ be any ample line bundle.
By Lemma \ref{sigma basis}, we can 
write 
$$P=\ot_{\rho\not\subset\sigma}\cO(D_\rho)^{\ot b_\rho}.$$
with all $b_\rho >0$. By the definition of $\beta_{C'}$ we have
$$\int_{\beta_{C'}}c_1(P)=\sum_{\rho\not\subset\sigma} b_\rho{\mathrm {deg}}(L_\rho|_{C'}).$$
Since $\prod_{\rho\not\subset\sigma}u_\rho^{b_\rho}$ is a global section of $\ot _{\rho\not\subset\sigma}L_\rho^{\ot b_\rho} |_{C'}$ which is
nonvanishing at $p$, the degree of this line bundle is nonnegative. Moreover, the degree is zero if and only if the section is
constant. This implies that all $u_\rho$'s are constant and hence $\beta_{C'}=0$.

\end{proof}

\begin{Rmk}\label{effective}Note that the above proof only uses part of nondegeneracy property  $(1)$ in Definition \ref{stable qmap}, namely
that it holds at the generic point of each irreducible component of $C$ (and does not use stability). This will be used later.
\end{Rmk}
\begin{Cor}\label{boundedness-curve} For fixed, $g$, $k$, and $\beta$, the number of components of the underlying
curve of a stable toric quasimap is bounded.
\end{Cor}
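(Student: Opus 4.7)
The plan is to partition the irreducible components of $C$ into two sets according to whether $\beta_{C'}$ vanishes, and bound each set independently. Setting $d := \beta\cdot \mathcal{O}_{X_\Sigma}(1) = \sum_\rho \alpha_\rho d_\rho$, we have $\deg(\mathcal{L}|_{C'}) = \beta_{C'}\cdot\mathcal{O}_{X_\Sigma}(1)$ for every component $C'$, and these degrees sum to $d$. By Lemma \ref{indep} each $\beta_{C'}$ is effective, and its proof in fact shows that $\int_{\beta_{C'}} c_1(\mathcal{L})$ is strictly positive whenever $\beta_{C'}\ne 0$ (since $\mathcal{L}$ is ample). Being a nonnegative integer, it is then $\ge 1$, so the number of components on which $\beta_{C'}\ne 0$ is at most $d$.

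It remains to bound the ``$\beta$-trivial'' components, on which $\mathcal{L}$ has degree zero. On such a component the stability condition (2) becomes purely combinatorial: a rational $\beta$-trivial component must carry at least three special points (markings or nodes), and an elliptic $\beta$-trivial component must carry at least one. Let $N_0$, $N_1$, $N_{\ge 2}$ denote the numbers of $\beta$-trivial components of genera $0$, $1$, and $\ge 2$, and $N_{\mathrm{pos}}\le d$ the number of components with $\beta_{C'}\ne 0$. The plan is to combine the arithmetic-genus identity
\[
g = \sum_i g_i + \delta - N + 1,
\]
where $\delta$ is the number of nodes and $N=N_0+N_1+N_{\ge 2}+N_{\mathrm{pos}}$, with the special-point count $k+2\delta = \sum_i(m_i+n_i)$, bounded below by $3N_0+N_1$ thanks to the stability constraint just described. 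A short manipulation (using $\sum g_i \ge N_1 + 2 N_{\ge 2}$) then yields an inequality of the shape
\[
N_0 + N_1 + 2N_{\ge 2} \ \le\ 2g-2+k+2N_{\mathrm{pos}},
\]
and inserting $N_{\mathrm{pos}}\le d$ produces a uniform bound on $N$ depending only on $g$, $k$, and $\beta$.

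The only genuinely geometric input is the effectiveness statement of Lemma \ref{indep}, which is already in hand; everything else is dual-graph bookkeeping modeled on the standard bound for components of a Deligne-Mumford stable curve, with the line bundle $\mathcal{L}$ playing the role of a ``twist'' that excuses the components with $\beta_{C'}\ne 0$ (at most $d$ of them) from satisfying the usual minimum-special-points requirement. I therefore do not anticipate any serious obstacle beyond keeping the inequalities in order.
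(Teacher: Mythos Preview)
Your argument is correct and rests on the same idea as the paper's: Lemma~\ref{indep} bounds the components with $\beta_{C'}\neq 0$ by the ample degree $d=\int_\beta c_1(\cO_{X_\Sigma}(1))$, while the $\beta$-trivial components are forced by the stability condition to be Deligne--Mumford stable and are therefore bounded by standard dual-graph arithmetic. The paper's three-sentence proof is terser and partitions slightly differently---it notes that components carrying a marking or having positive geometric genus are trivially bounded by $k$ and $g$, and that the remaining (rational, unmarked) DM-unstable components have nonzero class---leaving the bound on the DM-stable part implicit; your partition by whether $\beta_{C'}$ vanishes is cleaner, and your explicit inequality $N_0+N_1+2N_{\ge 2}\le 2g-2+k+2N_{\mathrm{pos}}$ makes the whole count self-contained.
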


\begin{proof} We only need to bound the number of {\it unstable} rational components with no markings on them. Such components must
have exactly two nodes by stability
and the proof of Lemma \ref{indep} shows that their homology classes are effective and nonzero.
Hence their number is bounded.

\end{proof}

As usual, the stability condition is imposed in order to rule out infinitesimal automorphisms:

\begin{Lemma}\label{autom} The automorphism group of a stable toric quasimap is
finite and reduced.
\end{Lemma}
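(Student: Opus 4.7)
The plan is to split the automorphism group scheme $\mathrm{Aut}(\phi)$ of a stable toric quasimap $\phi=((C,p_\bullet),L_\rho,u_\rho,\phi_m)$ via the forgetful map
\[
\mathrm{Aut}(\phi)\longrightarrow\mathrm{Aut}(C,p_\bullet),\qquad (f,\theta_\rho)\longmapsto f,
\]
and to show that this map is injective and has finite image. Reducedness is then automatic by Cartier's theorem, since we are in characteristic zero.

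First I would verify injectivity. An element of the kernel is a tuple $(\lambda_\rho)\in(\CC^*)^{\Sigma(1)}$ (since $C$ is connected, each automorphism of $L_\rho$ is scalar multiplication by a single $\lambda_\rho\in\CC^*$); compatibility with the trivializations $\phi_m$ forces $\prod_\rho\lambda_\rho^{\langle\rho,m\rangle}=1$ for all $m\in M$, i.e.\ $(\lambda_\rho)\in\GG$ in the exact sequence \eqref{exact seq2}. Compatibility with the sections gives $\lambda_\rho u_\rho=u_\rho$, so $\lambda_\rho=1$ whenever $u_\rho$ is a nonzero global section. Fixing any irreducible component $C'$ and invoking nondegeneracy, there is a maximal cone $\sigma$ with $u_\rho|_{C'}\ne 0$ for every $\rho\not\subset\sigma$, hence $\lambda_\rho=1$ for such $\rho$. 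By Lemma \ref{sigma basis}, $\{D_\rho:\rho\not\subset\sigma\}$ is a $\ZZ$-basis of $\mathrm{Pic}(X_\Sigma)$, so under the identification $\GG=\mathrm{Hom}(\mathrm{Pic}(X_\Sigma),\CC^*)$ these coordinates determine an element of $\GG$, forcing $(\lambda_\rho)=1$.

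Next I would show the image is finite. An automorphism of $C$ permutes its finitely many components (Corollary \ref{boundedness-curve}), so it suffices to bound, for each irreducible component $C'$, the group of automorphisms of $C'$ fixing the special points (nodes and markings) and lifting to an automorphism of $\phi$. On a component where $(C',\text{special points})$ is already stable in the Deligne--Mumford sense, this group is finite a priori. The remaining cases are $C'\cong\PP^1$ with exactly two special points and $C'$ elliptic with no special points; here $\mathrm{Aut}(C',\text{special points})$ is one-dimensional ($\GG_m$ or $E$, respectively). Away from the base locus $B$ the quasimap data defines a genuine morphism $\mu:C\setminus B\to X_\Sigma$, and any automorphism in the image must satisfy $\mu\circ f=\mu$ on $C'\setminus B$. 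By stability, $\mathcal{L}|_{C'}$ has positive degree, so by (the proof of) Lemma \ref{indep} the class $\beta_{C'}$ is nonzero and $\mu|_{C'\setminus B}$ is nonconstant. The subgroup of the one-dimensional $\mathrm{Aut}(C')$ preserving $\mu$ is a closed algebraic subgroup; if it were positive dimensional it would act with dense orbit (in the $\PP^1$ case) or transitively (in the elliptic case), forcing $\mu|_{C'}$ constant --- a contradiction. Hence the subgroup is finite, and finiteness of $\mathrm{Aut}(\phi)$ follows.

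Finally, $\mathrm{Aut}(\phi)$ is a group scheme of finite type over $\CC$, and by Cartier's theorem every such group scheme in characteristic zero is reduced. The main subtlety in the argument is the unstable components step: one must genuinely use the positivity of $\deg\mathcal{L}|_{C'}$ (encoded in the $\epsilon$-stability hypothesis) together with the nondegeneracy of $\mu$ off $B$ to rule out the continuous piece of $\mathrm{Aut}(C')$; everywhere else the reasoning is formal from the $\Sigma$-collection structure and Lemma \ref{sigma basis}.
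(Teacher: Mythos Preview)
Your strategy --- factoring through $\mathrm{Aut}(C,p_\bullet)$, killing the kernel via Lemma~\ref{sigma basis}, then bounding the image componentwise --- is sound and more explicit than the paper's terse proof (which argues directly with the nonzero section of $\cL|_{C'}$). The kernel computation is correct.

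There is, however, a gap in the unstable-component step: the implication ``$\beta_{C'}\ne 0\Rightarrow \mu|_{C'\setminus B}$ is nonconstant'' fails. For instance, take $X_\Sigma=\PP^1$, $C'\cong\PP^1$ with special points $p,q$, set $L_1=L_2=\cO_{C'}(1)$ and $u_1=u_2=s$ for a section $s$ vanishing only at a single point $b\notin\{p,q\}$. Then $\beta_{C'}\ne 0$, the unique base point on $C'$ is $b$, yet $\mu|_{C'\setminus\{b\}}\equiv[1:1]$ is constant. The proof of Lemma~\ref{indep} shows only that not all $u_\rho$ can be constant sections of trivial bundles; when base points absorb the degree, the induced rational map may still collapse. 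The repair is short: observe that any $f$ in the image must also preserve the base locus $B\cap C'$, since $u_\rho(y)=0\iff u_\rho(f(y))=0$. Now split into cases. If $B\cap C'=\emptyset$ then $\mu|_{C'}$ is an honest morphism with $\mu_*[C']=\beta_{C'}\ne 0$, hence nonconstant, and your contradiction goes through. If $B\cap C'\ne\emptyset$ then $f$ permutes a nonempty finite subset of $C'\setminus\{\text{special points}\}$; since the one-dimensional group $\mathrm{Aut}(C',\text{special points})$ acts freely there (both for $\GG_m$ on $\PP^1\setminus\{0,\infty\}$ and for translations on an elliptic curve), the image is finite.
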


\begin{proof} We only need to consider rational components $C'$ of $C$ with exactly two special points. As in the 
proof of Lemma \ref{indep}, by (generic) nondegeneracy
$\cL|_{C'}$ has a nonzero global section, while by stability $\cL|_{C'}$ is nontrivial. Since an automorphism of the quasimap
must preserve this section, the lemma follows.

\end{proof}

\begin{Def}\label{family}
A family of genus $g$ stable toric quasimaps to $X_\Sigma$ of class $\beta$ over a scheme $S$ consists of the data
$$(\pi:{\mathcal C}\ra S, \{ p_i:S\ra {\mathcal C} \}_{i=1,\dots ,k}, \{L_\rho\}, \{u_\rho\}, \{\phi_m\})
$$
where 
\begin{itemize}
\item ${\pi:\mathcal C}\ra S$ is a flat, projective morphism of relative dimension one,
\item $p_i$, $i=1,\dots ,k$ are sections of $\pi$,
\item $L_\rho$, $\rho\in\Sigma(1)$ are line bundles on ${\mathcal C}$,
\item $u_\rho\in\Gamma({\mathcal C},L_\rho)$ are global sections,
\item $\phi _m : \ot _\rho L_\rho ^{\langle\rho,  m\rangle} \ra \mathcal{O}_C$ are isomorphisms, satisfying the compatibility conditions (\ref{comp}),
\end{itemize}
such that the restriction of the data to every geometric fiber ${\mathcal C}_s$ of $\pi$ is a stable $k$-pointed toric quasimap of genus $g$ and class $\beta$.

An isomorphism between two  such families $({\mathcal{C}}\ra S,...)$ and $({\mathcal{C}'}\ra S,...)$ consists of an isomorphism of
$S$-schemes $f:{\mathcal C}\lra {\mathcal C}' $ and a collection of isomorphisms $L_\rho\lra f^*{L'}_\rho$ which preserve the markings,
the trivializations, and the sections.

\end{Def}

We therefore obtain a stack $\mathcal{T}_{g,k}(X_\Sigma,\beta)$ whose objects are isomorphism classes
of families of stable toric quasimaps to $X_\Sigma$,
with fixed discrete parameters $g,k$, and $\beta$.

It will be convenient for some of the arguments in the rest of the paper to consider an equivalent description of the data
defining a toric quasimap (furthermore, this description is the
one that fits in the general framework of quasimaps to GIT quotients which will be treated elsewhere). 
 For this, recall that we have chosen an integral basis $\{{\mathcal L}_1,\dots ,{\mathcal L}_r\}$ of ${\mathrm{Pic}}(X_\Sigma)$ and denoted by $A=(a_{i\rho})$
 the matrix of the natural map $\ZZ^{\Sigma(1)}\ra{\mathrm{Pic}}(X_{\Sigma})$, so that 
 $${\mathcal O}(D_\rho)=\bigotimes_{i=1}^r{\mathcal L}_i^{\otimes a_{i\rho}}.$$
 The exact sequence (\ref{exact seq}) then shows the following.
 
 \begin{Lemma} \label{qmap2}
 Let $\beta\in H_2(X_\Sigma,\ZZ)$ be an effective curve class. Isomorphism classes of stable toric quasimaps of class $\beta$ are given 
 by isomorphism classes of the data 
 $$( (C,p_1,\dots ,p_k), \{ {\mathcal P}_i\; |\; i=1,\dots, r\}, \{ u_\rho\}_{\rho\in\Sigma(1)}),$$ 
where

\begin{itemize}

\item $(C,p_1,\dots ,p_k)$ is a connected, at most nodal, $k$-pointed projective curve of genus $g$, with $p_i$ nonsingular distinct points,

\item ${\mathcal P}_i$ are line bundles on $C$, of degrees $f_i:=\int_\beta c_1({\mathcal L}_i)$,

\item $u_\rho \in \Gamma (C, L_\rho )$ are global sections of the line bundles $$L_\rho:=\bigotimes_{i=1}^r{\mathcal P}_i^{\otimes a_{i\rho}}, $$
  \end{itemize}
  subject to the nondegeneracy and stability conditions of Definition \ref{stable qmap}. Here the line bundle $\cL$
  in the stability condition is
  $$\cL=\otimes_{\rho\in\Sigma(1)}(\otimes_{j=1}^r \cP_j^{\otimes a_{j\rho}})^{\otimes \alpha_\rho} ,$$
  and isomorphisms of the data are isomorphisms of the underlying curve preserving the markings, the line bundles ${\mathcal P}_i$, and
  the sections $u_\rho$.
  \end{Lemma}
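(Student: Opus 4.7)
The plan is to verify that the data described in the lemma is equivalent to the data of Definition \ref{stable qmap}, using the exact sequence (\ref{exact seq}) as the main structural input.

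For the forward direction, I take data $((C,p_1,\ldots,p_k), \{\cP_i\}, \{u_\rho\})$ and produce all the ingredients of Definition \ref{stable qmap}. I set $L_\rho := \ot_{i=1}^r \cP_i^{\ot a_{i\rho}}$, so each $u_\rho$ is tautologically a section of $L_\rho$. The degree check
$$\deg L_\rho = \sum_i a_{i\rho} f_i = \sum_i a_{i\rho}\,\beta\cdot \cL_i = \beta\cdot \cO(D_\rho) = d_\rho$$
is immediate from the definition of $f_i$ and the choice of basis. The essential step is constructing the $\phi_m$: for $m\in M$, exactness of (\ref{exact seq}) at $\ZZ^{\Sigma(1)}$ says that $\sum_\rho \langle \rho,m\rangle D_\rho$ maps to $0$ in $\Pic(X_\Sigma)$, which in matrix form reads $\sum_\rho a_{i\rho}\langle \rho,m\rangle = 0$ for every $i$. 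Consequently $\ot_\rho L_\rho^{\ot \langle \rho,m\rangle} = \ot_i \cP_i^{\ot 0} = \cO_C$ canonically, and this tautological identification is taken as $\phi_m$. The compatibility (\ref{comp}) follows at once from the canonical nature of these identifications. The line bundle appearing in the stability condition is the same in both descriptions, as the displayed formula for $\cL$ in the lemma shows, so nondegeneracy and stability transfer without change.

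For the converse, given data $(L_\rho,\phi_m)$ as in Definition \ref{stable qmap}, I use that $\Pic(X_\Sigma) \cong \ZZ^r$ is free to split (\ref{exact seq}), choosing integers $b_{\rho i}$ with $\sum_\rho a_{j\rho} b_{\rho i} = \delta_{ij}$, and set $\cP_i := \ot_\rho L_\rho^{\ot b_{\rho i}}$. For each $\rho$, the element $D_\rho - \sum_{\rho',i} b_{\rho' i} a_{i\rho} D_{\rho'}$ of $\ZZ^{\Sigma(1)}$ maps to $0$ in $\Pic(X_\Sigma)$, hence equals $\iota(\mu_\rho)$ for a unique $\mu_\rho \in M$; the trivialization $\phi_{\mu_\rho}$ then supplies a canonical isomorphism $L_\rho \cong \ot_i \cP_i^{\ot a_{i\rho}}$ through which the $u_\rho$ become sections of the right bundles, and the degrees of the $\cP_i$ are forced to be $f_i = \beta\cdot\cL_i$.

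The main obstacle, once the two constructions are in place, is purely bookkeeping: one must check that they are mutually inverse up to the notion of isomorphism in Definition \ref{isomorphism}, and that on the level of isomorphism classes the $\cP_i$ produced by the converse construction do not depend on the choice of splitting. Both points reduce to exactness of (\ref{exact seq}) combined with the compatibility (\ref{comp}) of the $\phi_m$, so no genuine new input is required beyond the Cox description already recalled in \S \ref{maps}.
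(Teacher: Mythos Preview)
Your proposal is correct and is precisely the argument the paper has in mind: the paper does not write out a proof at all, merely stating that ``The exact sequence (\ref{exact seq}) then shows the following,'' and your write-up is a careful unpacking of that claim. Both directions you give---producing the canonical trivializations $\phi_m$ from the vanishing $\sum_\rho a_{i\rho}\langle\rho,m\rangle=0$, and conversely recovering the $\cP_i$ via a splitting of (\ref{exact seq}) together with the $\phi_{\mu_\rho}$---are the standard translation, so there is nothing to add.
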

 
 Similarly, we have a description for isomorphism classes of families equivalent to that in Definition \ref{family}. More precisely, given a base scheme $S$,
 the groupoid whose objects are families of stable toric quasimaps over $S$ and whose morphisms are isomorphisms of such families is equivalent
 to the groupoid whose objects are families over $S$ of data as in Lemma \ref{qmap2} and whose morphisms are isomorphisms of the data. It follows that
 we may use the data in the lemma to define the moduli stack $\mathcal{T}_{g,k}(X_\Sigma,\beta)$.
 
 \begin{Rmk} The alternative definition makes it obvious that when $X_\Sigma=\PP^n$ with 
 the usual polarization $\cO_{\PP^n}(1)$, the functor 
 ``stable toric quasimaps to $\PP^n$of degree $d$" and the functor ``stable quotients of degree $d$ to $\PP^n$" of \cite{MOP} coincide.
 Indeed, a quotient
 $$0\lra S\lra\cO_{C}^{\oplus n+1}\lra Q\lra 0$$
 of rank $n$ and degree $d$ with no torsion at nodes and markings is equivalent to $n+1$ sections of $S^\vee$ such that the map
 $\cO_C^{\oplus n+1}\lra S^\vee$ is generically surjective, and surjective at nodes and markings. 
 This is precisely our nondegeneracy condition. The stability conditions also match.
 \end{Rmk}
 
 Corollary \ref{boundedness-curve} establishes boundedness for the underlying curves in families
 of stable toric quasimaps. We will also need a boundedness result
 for the line bundles $\cP_j$.
 
 \begin{Lemma} Let 
 $$( \cC\ra S, \{p_i:S\ra\cC\; |\; i=1,\dots ,k\}, \{ {\mathcal P}_j\; |\; j=1,\dots, r\}, \{ u_\rho\}_{\rho\in\Sigma(1)})$$
 be a family of $k$-pointed stable toric quasimaps of genus $g$ and class $\beta$, parametrized by a 
 scheme $S$. There exists a positive integer $c=c(\beta)$, depending only on $\beta$, such that for every
 $1\leq j\leq r$, every geometric point $s$ of $S$, and every component $C'$ of $\cC_s$ we have
 $$\left |{\mathrm {deg}}(\cP_j|_{C'})\right |<c.$$
  
 \end{Lemma}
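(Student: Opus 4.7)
The plan is to reduce the statement to a bound on intersection numbers of effective curve classes whose sum equals $\beta$, and then circumvent the fact that the basis $\{\cL_1,\dots,\cL_r\}$ of $\Pic(X_\Sigma)$ need not consist of ample line bundles by writing each $\cL_j$ as a difference of two ample classes.

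First, fix a geometric point $s\in S$ and an irreducible component $C'$ of the fiber $\cC_s$. Restricting the family to $C'$ gives the data of line bundles $\cP_j|_{C'}$ together with sections $u_\rho|_{C'}$ of $L_\rho|_{C'}$, and these sections still satisfy the nondegeneracy condition at the generic point of $C'$ (since the set of base points of the quasimap on $\cC_s$ is finite). By Remark \ref{effective} this is exactly what is needed in the proof of Lemma \ref{indep} to conclude that the induced homology class $\beta_{C'}\in H_2(X_\Sigma,\ZZ)$, defined by $\beta_{C'}\cdot D_\rho = \deg(L_\rho|_{C'})$, is effective. Since $\sum_{C'}\beta_{C'}=\beta$ and each $\beta_{C'}$ is effective, pairing with an arbitrary ample class $H$ on $X_\Sigma$ yields
$$0\leq H\cdot \beta_{C'}\leq H\cdot \beta.$$

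Now fix once and for all an ample class $H_0$ on $X_\Sigma$, and for each $j=1,\dots,r$ choose a positive integer $N_j$ such that $\cL_j+N_jH_0$ is ample (possible because the ample cone is open). Writing $\cL_j=(\cL_j+N_jH_0)-N_jH_0$ as a difference of ample classes, and using that $\deg(\cP_j|_{C'})=\int_{\beta_{C'}}c_1(\cL_j)$, we get
$$\deg(\cP_j|_{C'})=\beta_{C'}\cdot c_1(\cL_j+N_jH_0)-N_j\,\beta_{C'}\cdot c_1(H_0).$$
Each of the two terms on the right is nonnegative and bounded above by the corresponding pairing with $\beta$, so
$$\bigl|\deg(\cP_j|_{C'})\bigr|\leq \beta\cdot c_1(\cL_j+N_jH_0)+N_j\,\beta\cdot c_1(H_0).$$
Taking $c=1+\max_{1\leq j\leq r}\bigl(\beta\cdot c_1(\cL_j+N_jH_0)+N_j\,\beta\cdot c_1(H_0)\bigr)$ gives a constant depending only on $\beta$ (and the fixed data $H_0$, $N_j$, and the chosen basis of $\Pic(X_\Sigma)$) as required.

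There is essentially no substantial obstacle here; the only point to verify carefully is that the effectiveness argument from Lemma \ref{indep} really applies component-by-component without invoking the stability condition, which is exactly the content of Remark \ref{effective}. Once effectiveness of each $\beta_{C'}$ is in place, the bound is a direct consequence of the projectivity of $X_\Sigma$.
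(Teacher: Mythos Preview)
Your proof is correct, but the route differs from the paper's. Both arguments begin by invoking Remark~\ref{effective} (via Lemma~\ref{indep}) to conclude that each component class $\beta_{C'}$ is effective, and both then use this to bound pairings with ample classes by the corresponding pairings with $\beta$. The divergence is in the final step. The paper stays entirely within the toric combinatorics: for a maximal cone $\sigma$ adapted to $C'$ it bounds $0\le\deg(L_\rho|_{C'})\le\int_\beta c_1(\cO_{X_\Sigma}(1))$ for each $\rho\not\subset\sigma$ (using Lemma~\ref{sigma basis} to write the polarization in that basis with positive coefficients), and then expresses each $\cP_j$ as an integer combination of these $L_\rho$'s via the change-of-basis matrix. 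Your argument instead writes each $\cL_j$ as a difference of two ample classes and bounds the two resulting terms separately. Your method is more portable---it uses nothing about $X_\Sigma$ beyond projectivity once effectiveness of the $\beta_{C'}$ is in hand---while the paper's method keeps the toric bookkeeping explicit and yields a slightly more concrete constant (a maximum over finitely many cones of an explicit linear expression in the entries of the basis-change matrices).
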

 
 \begin{proof} Pick a component $C'$ of $\cC_s$. By the generic nondegeneracy condition, there
 is a maximal cone $\sigma\in\Sigma(n)$ such that 
 each $u_\rho$, $\rho\not\subset\sigma$ is a nonzero section of the corresponding $L_\rho$ on $C'$.
 Hence $L_\rho$ has nonnegative degree on $C'$. By Lemma \ref{sigma basis} we can write the polarization
 $\cO_{X_\Sigma}(1)$ as $\ot_{\rho\not\subset\sigma}\cO(D_\rho)^{\ot \alpha_\rho}$ with $\alpha_\rho >0$.
 From this and Lemma \ref{indep} we get that for every $\rho\not\subset\sigma$,
 $${\mathrm {deg}}(L_\rho|_{C'})\leq\int_\beta c_1(\cO_{X_\Sigma}(1)).$$
 Since each $\cP_j$ is expressed uniquely as a linear combination of $L_\rho$, $\rho\not\subset\sigma$, the lemma follows. 
 
 \end{proof}
 
 \begin{Cor}\label{boundedness}  For a family as above, let $\cO_\cC(1)$ be an $S$-relative
 ample line bundle. There exists an integer $m>\!>0$, depending 
 only on $g$, $k$, and $\beta$, such that for every geometric point $s\in S$ and every $1\leq j\leq r$, the line
 bundle $\cP_j(m)$ is globally generated on $\cC_s$, with $$H^1(\cC_s,\cP_j(m))=0.$$ Similarly, there exists
 $m_1>\!>m$ such that the same conclusions hold for each of the line bundles $L_\rho$.
 
 \end{Cor}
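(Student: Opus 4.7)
The strategy is to combine the numerical boundedness already established with a standard global-generation and $H^1$-vanishing criterion on nodal curves, then twist enough to absorb all the negative contributions.

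First, I invoke Corollary \ref{boundedness-curve} to bound the number of irreducible components of every geometric fiber $\cC_s$ by a constant $N=N(g,k,\beta)$; each component has arithmetic genus at most $g$, and the number of nodes on each component is bounded by $N-1$. Next, the previous lemma gives $|\deg(\cP_j|_{C'})|\leq c(\beta)$ for every component $C'$ of every fiber and every $j$. Finally, since $\cO_\cC(1)$ is $S$-relatively ample, its restriction to any component of any fiber is an ample line bundle on an irreducible projective curve, so $\deg(\cO_\cC(1)|_{C'})\geq 1$. Combining these, for any integer $m\geq 0$ one has
\[
\deg\!\bigl(\cP_j(m)|_{C'}\bigr)\;\geq\; m-c(\beta).
\]

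Next I would quote (or prove by induction on the number of components via the normalization exact sequence) the standard numerical criterion: there exists a constant $D=D(g,N)$, depending only on the genus and on the number of components, such that any line bundle on a connected nodal curve arising as a fiber $\cC_s$ whose degree on each irreducible component is at least $D$ is globally generated and has vanishing $H^1$. Concretely, one may take $D=2g+N$, since $H^1$-vanishing on each component requires only $\deg\geq 2g_{C'}-1+\#\{\text{nodes on }C'\}$, and global generation needs one more. Taking any $m$ with $m-c(\beta)\geq D$ therefore yields the conclusion for all $\cP_j$ simultaneously; such an $m$ depends only on $g,k,\beta$.

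For the second assertion, recall from Lemma \ref{qmap2} that $L_\rho=\bigotimes_{j=1}^r \cP_j^{\otimes a_{j\rho}}$, so
\[
\deg\!\bigl(L_\rho(m_1)|_{C'}\bigr)\;=\;\sum_{j=1}^r a_{j\rho}\deg(\cP_j|_{C'})\;+\;m_1\deg(\cO_\cC(1)|_{C'})\;\geq\;m_1-\Bigl(\sum_j |a_{j\rho}|\Bigr)c(\beta),
\]
where the entries of $A$ are fixed by $X_\Sigma$. Choosing $m_1$ large enough (in terms of $A$, $c(\beta)$, and $D$) so that this exceeds $D$ gives the analogous conclusion for each $L_\rho$.

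The only step requiring any thought is the uniform numerical threshold $D$ on nodal fibers, and this is a routine consequence of the already established bounds on the combinatorial type of the fiber; no further input is needed.
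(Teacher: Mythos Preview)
The paper gives no proof of this corollary, treating it as an immediate consequence of the preceding degree bound together with Corollary~\ref{boundedness-curve}; your plan is precisely the standard argument one writes down to justify such a statement, and it is correct in structure.

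One small inaccuracy: the claim that ``the number of nodes on each component is bounded by $N-1$'' is false in general (a component may carry self-nodes and may meet the rest of the curve in many points), so the specific threshold $D=2g+N$ can be too small. What is true is that a connected genus-$g$ nodal curve with $N$ components has at most $g+N-1$ nodes in total, and each component has arithmetic genus at most $g$; hence $\deg(\omega_{\cC_s}|_{C'})\le 2g-2+(g+N-1)$, and any $D$ exceeding this (plus one, for global generation) works. This does not affect the logic of your argument, only the explicit constant.
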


\subsection{A construction of $\cTX$}  \label{new-construction} The goal of this subsection is to prove 
\begin{Thm} \label{construction} The stack $\mathcal{T}_{g,k}(X_\Sigma,\beta)$ is a Deligne-Mumford stack, of finite type over $\CC$.
Moreover, it admits a perfect obstruction theory.
\end{Thm}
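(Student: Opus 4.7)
The plan is to realize $\mathcal{T}_{g,k}(X_\Sigma,\beta)$ as an open substack of the vanishing locus of a section of a vector bundle on a smooth Artin stack, using the reformulation of stable toric quasimap data given in Lemma \ref{qmap2}.

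First I would introduce the natural ambient stack. Let $\mathfrak{M}_{g,k}$ be the smooth Artin stack of prestable $k$-pointed curves of genus $g$, and let $\mathfrak{P} \to \mathfrak{M}_{g,k}$ be the relative Picard stack classifying data $(C, p_1, \dots, p_k, \mathcal{P}_1, \dots, \mathcal{P}_r)$ with $\deg \mathcal{P}_j = f_j$ and with multidegree on each component bounded by Corollary \ref{boundedness}. Picard stacks of families of nodal curves are smooth, so $\mathfrak{P}$ is smooth, and by Corollary \ref{boundedness-curve} together with Corollary \ref{boundedness} we may restrict to an open substack $\mathfrak{P}^\circ \subset \mathfrak{P}$ which is of finite type and contains the image of every stable toric quasimap.

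Next I would realize the sections $u_\rho \in H^0(C,L_\rho)$ (where $L_\rho := \bigotimes_j \mathcal{P}_j^{\otimes a_{j\rho}}$) as a section of a vector bundle on a smooth stack. On the universal curve $\pi: \mathcal{C} \to \mathfrak{P}^\circ$, choose, by boundedness and Serre vanishing, an effective Cartier divisor $D \subset \mathcal{C}$ flat over $\mathfrak{P}^\circ$ such that for every $\rho$ one has $R^1\pi_* L_\rho(D) = 0$ and $L_\rho(D)$ is relatively globally generated. Then $V_\rho := \pi_* L_\rho(D)$ and $W_\rho := \pi_* (L_\rho(D)|_D)$ are vector bundles on $\mathfrak{P}^\circ$ fitting in
$$0 \to \pi_* L_\rho \to V_\rho \to W_\rho \to R^1\pi_* L_\rho \to 0.$$
Form the smooth Artin stack $\mathfrak{A} := \mathrm{Spec}_{\mathfrak{P}^\circ}\mathrm{Sym}\bigl(\bigoplus_\rho V_\rho^\vee\bigr)$, the total space of $\bigoplus_\rho V_\rho$. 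The tautological element on $\mathfrak{A}$ maps via $V_\rho \to W_\rho$ to a canonical section $s$ of (the pullback of) the vector bundle $\mathfrak{W} := \bigoplus_\rho W_\rho$, and its zero locus $Z(s) \subset \mathfrak{A}$ is the closed substack whose points are tuples $(C,p_i,\mathcal{P}_j,u_\rho)$ with each $u_\rho \in H^0(C,L_\rho)$.

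Then I would impose nondegeneracy and stability as open conditions on $Z(s)$. The existence of a maximal cone $\sigma$ with $u_\rho \ne 0$ near each point $y \notin B$, disjointness of $B$ from nodes and markings, and ampleness of $\omega_C(\sum p_i) \otimes \mathcal{L}^\epsilon$ are all open conditions; together they carve out $\mathcal{T}_{g,k}(X_\Sigma,\beta)$ as an open substack of $Z(s)$, finite type over $\mathbb{C}$. Lemma \ref{autom} ensures finite, reduced automorphism groups on this open locus, so the stack is Deligne-Mumford. Finally, since $\mathcal{T}_{g,k}(X_\Sigma,\beta)$ is an open substack of the zero locus of a section of the vector bundle $\mathfrak{W}$ on the smooth $\mathfrak{A}$, the standard construction provides a two-term perfect obstruction theory $[T_{\mathfrak{A}/\mathfrak{P}^\circ}|_{\mathcal{T}} \to \mathfrak{W}|_{\mathcal{T}}]^\vee$ relative to the smooth base $\mathfrak{P}^\circ$; intrinsically this is $(R\pi_*\bigoplus_\rho L_\rho)^\vee$ concentrated in $[-1,0]$, with tangents and obstructions to deforming the sections coming from $H^0$ and $H^1$ of $\bigoplus L_\rho$.

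The main obstacle is the construction of Step 2–3: assembling the ambient smooth stack $\mathfrak{A}$ in a way that the section $s$ is canonically defined and the resulting obstruction theory is manifestly independent of the auxiliary divisor $D$. The vector bundles $V_\rho, W_\rho$ depend on $D$, but the two-term complex $[V_\rho \to W_\rho]$ quasi-isomorphically represents $R\pi_* L_\rho$, so the induced obstruction theory is intrinsic; verifying this (and the Behrend-Fantechi axioms) is the one nontrivial check, while boundedness and the openness of nondegeneracy and stability are essentially formal.
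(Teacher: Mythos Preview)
Your proposal is correct and follows essentially the same strategy as the paper: build a smooth ambient stack over the relative Picard stack $\mathfrak{Pic}^r_\beta \to \mathfrak{M}_{g,k}$ as the total space of $\bigoplus_\rho \pi_*(L_\rho\text{-twisted})$, then cut out the locus of untwisted sections as the zero locus of a tautological section of a vector bundle, and impose nondegeneracy and stability as open conditions. The intrinsic description of the relative obstruction theory as $(R\pi_*\bigoplus_\rho L_\rho)^\vee$ is exactly what the paper obtains in \S\ref{virt}.

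The one organizational difference worth noting is the order of the last two steps. The paper imposes nondegeneracy and stability \emph{first}, on the sections of the twisted bundles $L_\rho(m_1)$, obtaining a smooth \emph{Deligne--Mumford} stack $\mathcal{E}$ (stability already forces finite automorphisms at this stage); then $\mathcal{T}_{g,k}(X_\Sigma,\beta) = Z(s) \subset \mathcal{E}$ is a closed substack of a smooth DM stack, so the absolute perfect obstruction theory and the virtual class as a refined top Chern class are immediate. You instead cut out $Z(s)$ inside the smooth Artin stack $\mathfrak{A}$ and then pass to the open, invoking Lemma~\ref{autom} afterwards for the DM property and phrasing the obstruction theory relative to $\mathfrak{P}^\circ$. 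Both orders are valid and yield the same virtual class; the paper's order has the cosmetic advantage that the ambient is already DM, while yours makes the identification with $R\pi_* L_\rho$ slightly more direct. Your remark that the auxiliary choice (your divisor $D$, the paper's twist $\mathcal{O}_\mathcal{C}(m_1)$) drops out because $[V_\rho \to W_\rho]$ represents $R\pi_* L_\rho$ is exactly the point, and the paper makes the same observation in \S\ref{virt}.
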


It is possible (and relatively straightforward) to prove the theorem via the usual
strategy: first present $\mathcal{T}_{g,k}(X_\Sigma,\beta)$ as the global stack-quotient of a scheme of finite type by a complex
algebraic group, then apply Lemma \ref{autom} together with Theorem. 4.21 of \cite{DM}
(see also \cite{Edidin}, Cor. 2.2) to conclude the theorem. The quotient construction is similar
to the one in \cite{MOP}. One finds a quasiprojective scheme parametrizing stable toric quasimaps
$$( (C,p_1,\dots ,p_k), \{ {\mathcal P}_i|i=1,\dots, r\}, \{ u_\rho\}_{\rho\in\Sigma(1)})$$ of genus $g$ and class $\beta$ together
with some additional structure which rigidifies them, and then one divides by the isomorphisms of the additional structure.

We choose to give a more direct proof,
by realizing $\cTX$ as a closed substack in a smooth Deligne-Mumford
stack $\cE$, given by explicit equations. More precisely, we construct $\cE$ together with a vector bundle
$\cV$ on it and a natural section $s$ of $\cV$, such that
$$Z(s)=\cTX.$$
Here $Z(s)$ is the stack-theoretic zero locus of $s$. The advantage of this method is that it provides
immediately a perfect obstruction theory for $\cTX$
whose associated virtual class  $[\cTX]^{\mathrm{vir}}$ is simply the refined top Chern class of $\cV$.
A more detailed discussion of the obstruction theory is given in \S\ref{virt} below.

\begin{proof}
Let $\fMgk$ be the stack of prestable $k$-pointed curves of genus $g$ and let $\mathfrak{P}ic\ra\fMgk$ be the
the relative Picard stack. We denote by
$$\mathfrak{P}ic^r{\stackrel{\phi}\lra}\fMgk$$ the $r$-fold fibered product of $\mathfrak{P}ic$ over $\fMgk$.
It is well-known that both stacks are smooth Artin stacks of infinite type and the morphism $\phi$ is smooth.

Fix an effective class $\beta\in H_2(X_\Sigma,\ZZ)$. There are morphisms of stacks fitting in the commutative diagram
\begin{equation}\label{diagram}\begin{array}{ccc}  \cTX   &= &\cTX \\  \\
 \mu\downarrow & &\downarrow \nu \\  \\
\mathfrak{P}ic^r&\stackrel\phi\lra &\fMgk 
\end{array}\end{equation}
with $\mu$ forgetting the sections $u_\rho$ and $\nu$ forgetting both the sections and the line bundles $\cP_i$.

By Corollary \ref{boundedness-curve} there exists an open and closed substack of finite type
$\fS\subset\fMgk$ such that the map $\nu$ factors through $\fS$. We may take $\fS$ such that its points
parametrize prestable curves with no rational tails.

Recall that specifying the class $\beta$ of a stable toric quasimap
 $$( (C,p_1,\dots ,p_k), \{ {\mathcal P}_i\; |\; i=1,\dots, r\}, \{ u_\rho\}_{\rho\in\Sigma(1)})$$ 
 is equivalent to specifying the degrees $f_i$ of the line bundles $\cP_i$.  
 Let
 $$\pi:\cC\lra\fS$$ 
 be the universal curve over $\fS$.

 On $\cC\times_\fS \mathfrak{P}ic^r$ we have universal line bundles 
$\cQ_j$, $j=1,\dots ,r$, 
restricting to $Q_j$ on the fiber over a point $((C,p_1,\dots,p_k), Q_1,\dots ,Q_r)\in\mathfrak{P}ic^r$.
Therefore, we also have universal line bundles
$$\cQ_\rho:=\ot_j\cQ_j^{\otimes a_{j\rho}}.$$
Let $\cO_\cC(1)$ be a $\pi$-relative ample bundle. 
Now fix an integer $m_1>\!\!>0$ (depending only on $g$, $k$, and $\beta$) as in Corollary \ref{boundedness}. 
In addition, we also fix an injection
\begin{equation}\label{section}
0\lra\cO_\cC\lra\cO_\cC(m_1).
\end{equation}

We denote by
 $$\mathfrak{P}ic_{\beta}^r{\stackrel{\phi_\beta}\lra}\fS$$
 the substack of $\mathfrak{P}ic^r$ obtained by imposing the conditions 
 
 $(i)$ the degree of $Q_j$ on $C$ is equal to $f_j$ for all $j=1,\dots, r$,
 
 $(ii)$ $H^1(C,Q_\rho(m_1))=0$ for all $\rho\in\Sigma(1)$,

 Since each of these is an open condition on the base of a family,  $\mathfrak{P}ic^r_\beta$ is an open substack. 
 Hence it is a smooth Artin stack of finite type.
 The map $\mu$ factors through $\mathfrak{P}ic^r_\beta$.

By condition $(ii)$, for each $\rho\in\Sigma(1)$ the sheaf
$$\pi_*\cQ_\rho(m_1) := \pi_*(\cQ_\rho\ot q_\cC^*\cO_\cC(1)^{m_1})$$
is a vector bundle on $\mathfrak{P}ic^r_\beta$ for every $\rho$. Here $q_\cC$ and $\pi=\pi_{\mathfrak{P}ic}$ are the projections
from $\cC\times_\fS \mathfrak{P}ic^r_\beta$ to the two factors.

Let $$p:\mathfrak{X}\lra\mathfrak{P}ic_\beta^r$$ denote the total space of the bundle $\oplus_\rho (\pi_*\cQ_\rho(m_1))$.
It is a smooth Artin stack of finite type and $p$ is a representable smooth morphism.
Points in the fiber of $p$ over a point
$$((C,p_1,\dots,p_k), Q_1,\dots ,Q_r)\in\mathfrak{P}ic^r_\beta$$
correspond to collections of sections
$$\{v_\rho\in H^0(C, Q_\rho (m_1))\;\; |\;\; \rho\in\Sigma(1)\}.$$

We let $$\cX'\subset \mathfrak{X} $$ be the substack determined by the requirement

$(iii)$ the {\it generic} nondegeneracy condition holds for the sections $v_\rho$

$(iv)$ the stability condition holds, i.e., the line bundle
$$ \omega_C(\sum_{i=1}^k p_i)\otimes \left(\otimes_\rho(\otimes_j Q_j^{\otimes a_{j\rho}})^{\otimes \alpha_\rho}\right)^\epsilon$$
is ample on $C$ for every $\epsilon >0$.

Since these are both open conditions on the base of a family, $\cX'$ is an open substack. Furthermore, the proof of Lemma \ref{autom}
implies that its points have finite automorphism groups. We conclude that $\cX'$ is Deligne-Mumford and smooth.

By construction, the stack $\cX'$ parametrizes data
$$( (C,p_1,\dots ,p_k), \{ {Q}_i\; |\; i=1,\dots, r\}, \{ v_\rho\}_{\rho\in\Sigma(1)}),$$
satisfying the conditions of Lemma \ref{qmap2}, except that the $v_\rho$'s are sections of the 
twisted line bundles $Q_\rho(m_1)$, and that they may have base-points at nodes or markings.

For each $\rho\in\Sigma(1)$, the injection of sheaves (\ref{section}) induces an identification of
$H^0(C,Q_\rho)$ with a vector subspace of $H^0(C,Q_\rho(m_1))$.
We define a closed substack $\cX''\subset\cX'$
by imposing the condition
\vskip.1in
$(v)$ for each $\rho\in\Sigma(1)$, the section $v_\rho$ lies in the subspace $H^0(C,Q_\rho)$ of $H^0(C,Q_\rho(m_1))$.

\vskip.1in
Let $\cV_\rho$ be the sheaf on $\cC\times_\fS \mathfrak{P}ic^r_\beta$ defined by the exact sequence

$$0\lra\cQ_\rho\lra\cQ_\rho(m_1)\lra\cV_\rho\lra 0.
$$
Then $\pi_*(\cV_\rho)$ is a vector bundle on $\mathfrak{P}ic^r_\beta$. The vector bundle

\begin{equation}\label{bundle}
\cV:= p^*\left (\oplus_{\rho\in\Sigma(1)}\pi_*(\cV_\rho)\right )\end{equation}
on $\cX'$ comes with a tautological section $s$, induced by the maps 
$$H^0(\cC,\cQ_\rho(m_1))\lra H^0(\cC,\cV_\rho),$$
whose vanishing corresponds exactly to imposing the condition $(v)$ above. Hence $\cX''$ is identified with the closed
substack $Z(s)$ in $\cX'$.

Finally, the stack $\cTX$ is then singled out as the open substack in $\cX''$ obtained by requiring
\vskip.1in
$(vi)$ there are no base points at nodes or markings.
\vskip.1in
We note that in principle the order in which the last two conditions $(v)$ and $(vi)$ are imposed could be reversed. Namely, there
is an open substack $\cE\subset \cX'$ (which is necessarily Deligne-Mumford and smooth) such that $\cTX$ is cut out in $\cE$ by
imposing condition $(v)$, i.e., as the zero locus $Z(s)$ of the tautological section of the bundle (\ref{bundle}) on $\cE$. However,
we do not know an explicit modification of the condition $(vi)$ that would describe $\cE$. This substack $\cE$ will be
used in subsection 5.3.

\end{proof}

\begin{Rmk}  
$(a)$ An almost identical construction can be given for the moduli stack $\Mgk(X_\Sigma,\beta)$ of
stable maps to $X_\Sigma$, with its usual obstruction theory. The only modifications needed are in 
conditions $(iv)$ and $(vi)$. Namely, the stability condition imposed is that 
$$\omega_C(\sum_ip_i)\otimes \left(\otimes_\rho(\otimes_j Q_j^{\otimes a_{j\rho}})^{\otimes \alpha_\rho}\right)^{\otimes 3}$$ 
is ample, while the nondegeneracy condition imposed is that the sections have {\it no} base
points.
Of course, the underlying curve of a stable map is allowed to have rational tails, so the substack of finite
type $\fS\subset\fMgk$ will be different.
We leave the details to the reader.

$(b)$ For a fixed nonsingular domain curve $C$ with no markings, the realization of the moduli space
as the zero locus of a section of a bundle on a toric fibration over the product 
of Jacobians $\mathrm{Jac}(C)^r$ was observed in \cite{OT}. 
In our construction the toric fibration is hidden by the use of Artin stacks and reemerges only after passing
to $(\CC^*)^r$ covers of $\cE$ and $\mathfrak{P}ic^r_\beta$.

$(c)$ By Lemma \ref{indep}, the moduli stack $\mathcal{T}_{g,k}(X_\Sigma,\beta)$ does not depend on the
choice of the polarization ${\mathcal O}_{X_\Sigma}(1)$. However, note we do not claim that it is
also independent on $\Sigma$ (or, in other words, on the presentation of $X$ as
a GIT quotient) and in fact we suspect that such independence does not hold, even though we do not have a 
specific example. A similar situation occurs for the case of stable quotients studied in \cite{MOP}:
the Grassmannians $G(1,n)$ and $G(n-1,n)$ are both isomorphic to the $(n-1)$-dimensional
projective space, but the corresponding moduli stacks of stable quotients are very different.
On the other hand, see Conjecture \ref{I=J} for a statement about the independence of the quasimap {\it invariants}
when the toric variety is assumed to be Fano. \end{Rmk}

\section{Properness}\label{properness}

\begin{Thm} The stack $\cTX$ is proper.
\end{Thm}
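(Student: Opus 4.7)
My plan is to verify properness via the valuative criterion, following the template pioneered by Kontsevich for stable maps and adapted by Marian--Oprea--Pandharipande for stable quotients. We already know from Theorem \ref{construction} that $\cTX$ is a Deligne--Mumford stack of finite type, so it suffices to check, for every DVR $R$ with fraction field $K$, residue field $k$, and uniformizer $\pi$, that every family $\xi_K$ over $\Spec K$ extends uniquely (up to unique isomorphism, after possibly a finite base change of $R$) to a family $\xi_R$ over $\Spec R$.

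For existence I would proceed in three stages. First, I would extend the underlying prestable $k$-pointed curve $(\cC_K,p_1,\dots,p_k)$ to a flat family $\cC_R\to\Spec R$ of prestable curves, using semistable reduction; after a finite extension of $R$, we may assume $\cC_R$ has smooth total space and the central fiber is a nodal curve. Second, I would extend each line bundle $\cP_j^K$ on $\cC_K$ to a line bundle $\cP_j$ on $\cC_R$ (the Picard stack of a family of prestable curves is smooth, so every line bundle on the generic fiber extends), and extend each section $u_\rho^K\in H^0(\cC_K,L_\rho^K)$ to a section $u_\rho$ of $L_\rho=\otimes_j\cP_j^{\otimes a_{j\rho}}$ on $\cC_R$, after clearing denominators by a power of $\pi$. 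The trivializations $\phi_m$ extend automatically, since they are determined (up to unit) by the identity $\sum_{\rho}\langle\rho,m\rangle D_\rho=0$ in $\Pic(X_\Sigma)$, which translates into a prescribed relation among the $\cP_j^{\otimes a_{j\rho}}$.

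At this stage the extended data will typically fail the nondegeneracy or stability condition on the central fiber: some $u_\rho$'s may vanish simultaneously along an entire component of $\cC_0$ (so $u_\rho=\pi\cdot u'_\rho$ on that component, forcing the nondegeneracy to fail on a divisor in $\cC_R$ rather than finitely many points), base points may accumulate at nodes or markings of $\cC_0$, or unstable rational components may appear. These three failures are repaired by surgery on $\cC_R$: (i) when the $u_\rho$ vanish with multiplicity along the central fiber (or along a component thereof), we divide out the common factor, which amounts to twisting the $\cP_j$ by $-\cO_{\cC_R}(\cC_0)$ (resp.~by a suitable combination of components of $\cC_0$); (ii) when a base point sits at a node or marking, we blow up $\cC_R$ at that point to insert an exceptional $\PP^1$ in the central fiber, then twist the $\cP_j$ by appropriate multiples of the exceptional divisor and divide the $u_\rho$ by the corresponding power of its defining section, so that the base point moves off the special points into the new rational component; (iii) unstable rational components (bridges or tails on which $\cL$ has degree zero after the previous modifications) are contracted. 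Each surgery strictly decreases a nonnegative integer invariant, e.g.~the sum over components of $\cC_0$ of the total order of vanishing of the $u_\rho$ on that component plus the multiplicities of base points located at special points. After finitely many steps the procedure terminates with a stable toric quasimap over $\Spec R$.

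For uniqueness, given two extensions $\xi_R,\xi'_R$ with isomorphic generic fibers, one first produces a birational map between their total spaces that is an isomorphism on the generic fiber, then shows it extends to a regular isomorphism on the central fiber: the stability condition asserts that $\omega_C(\sum p_i)\otimes\cL^\epsilon$ is ample for all $\epsilon>0$, which rigidifies both the underlying curve and, combined with Lemma \ref{autom} (finite and reduced automorphism group), the line bundles and sections. I expect the main obstacle of the proof to be a clean and complete verification of step (ii): one must check that the successive blow--ups, twists by exceptional divisors, and divisions of sections are mutually compatible across all $\rho\in\Sigma(1)$ (since the $u_\rho$ are sections of different but related line bundles $\otimes_j\cP_j^{\otimes a_{j\rho}}$), that the chosen twists preserve the trivializations $\phi_m$, and that a suitable numerical invariant strictly decreases after each modification, guaranteeing termination. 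Once this bookkeeping is organized, the remaining uniqueness and separatedness arguments are routine.
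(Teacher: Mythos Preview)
Your overall architecture matches the paper's: verify the valuative criterion by extending the curve, the line bundles, and the sections, then repair the central fiber by surgery.  The crucial difference is in how generic nondegeneracy on the central fiber is achieved, and here your proposal has a genuine gap.

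You propose to extend each $\cP_j$ and each $u_\rho$ separately and then, in step (i), ``divide out the common factor'' or twist the $\cP_j$ by a ``suitable combination of components of $\cC_0$''.  But the $u_\rho$'s are sections of \emph{different} line bundles $L_\rho=\otimes_j\cP_j^{\otimes a_{j\rho}}$; a twist of $\cP_j$ by $n_j C'$ shifts the order of vanishing of $u_\rho$ along $C'$ by $\sum_j a_{j\rho}n_j$, and you need to find integers $n_j$ so that after the twist \emph{some maximal cone's worth} of the $u_\rho$ is nonvanishing at the generic point of \emph{every} component, while all $u_\rho$ remain regular.  It is not at all clear a priori that such $n_j$ exist, and no decreasing invariant will help you locate them.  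You correctly flag the compatibility bookkeeping as the main obstacle, but you do not indicate any mechanism to resolve it.

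The paper's key idea (Lemma~\ref{lemmaB}) sidesteps this entirely: away from its base locus the quasimap on $\cC|_{\Delta^0}$ is an honest morphism to $X_\Sigma$, and since $X_\Sigma$ is \emph{proper} and $\cC$ is normal, this morphism extends over the central fiber except at finitely many points.  Pulling back the universal $\Sigma$-collection from $X_\Sigma$ then yields the modified line bundles and sections automatically satisfying generic nondegeneracy, with all compatibilities built in.  The same device drives the separatedness argument: both families define maps to $X_\Sigma$ away from base points, and two such maps agreeing on $\Delta^0$ must agree.  Your birational-extension sketch for uniqueness is too vague without this observation; the $\epsilon$-ampleness alone does not give you a Proj model to compare.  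Once you insert the ``use properness of $X_\Sigma$ to extend the map'' step, your steps (ii) and (iii) and the termination argument via bounding the number of nonzero-class components (Lemma~\ref{indep}) go through essentially as in the paper.
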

We will split the argument into two parts.

\subsection{Separatedness} 
The argument is similar to the one in \cite{MOP}.

Let $R$ be a DVR over $\CC$, with quotient field K. Let $\Delta =\Spec(R)$ and let $0\in\Spec(R)$ be the closed point;
put $\Delta^0=\Delta\setminus\{ 0\}=\Spec(K)$. 

Assume we have two flat families of $k$-pointed stable toric quasimaps 
$$({\mathcal C}_i\ra \Delta, \{p_1^i,\dots, p_k^i:\Delta\ra\cC_i\},\{\cP_1^i,\dots ,\cP_r^i\},\{u_\rho^i\; |\; \rho\in\Sigma(1)\}),\; i=1,2 $$ 
which are isomorphic over $\Delta^0$.

By semistable reduction, we can find a third family ${\mathcal C}$ of prestable $k$-pointed curves over $\Delta$ which dominates both ${\mathcal C}_i$'s
and preserves the sections.
We may assume that the central fiber $C_0$ of $\cC$ has automorphism group
at most 1-dimensional, since the central fibers $C_{0,i}$ of $\cC_i$ both have the same property. 
This means that $\cC\lra \cC_i$ is obtained by blowing up only nodes in the central fiber $C_{0,i}$.

Now pull back the bundles $\cP_j^i$ and the
sections $u_\rho^i$ on $\cC_i$ to $\cC$.  
Since the nondegeneracy condition holds at the nodes of $C_{0,i}$,
we obtain families over $\Delta$ of {\it prestable} toric quasimaps, i.e., families of data as in Lemma \ref{qmap2}, satisfying the nondegeneracy condition,
but possibly not the
stability condition on some components of the central fiber $C_0$. 

Over $\Delta^0$ we have an isomorphism between the two families.
Let ${\mathcal{B}}_1,{\mathcal{B}}_2\subset\cC$ be the base loci of the two families. Their union intersects $C_0$ in a
finite subset $B_0$ of nonsingular points.
Via Cox's description from $\S\ref{maps}$, we can think of either family as giving a regular map from $\cC\setminus({\mathcal{B}}_1\cup{\mathcal{B}}_2)$
to $X_\Sigma$. These maps agree over $\Delta^0$, therefore they must be the same.
It follows that the isomorphism between the two families extends to an isomorphism on $\cC\setminus B_0$. 
Since $B_0$ is a finite set of smooth points on a surface, the isomorphism extends to all of $\cC$.

Finally, there cannot be any component of $C_0$ which is contracted to a node by, say, the map
$\pi _1: \cC\ra \cC_1$ but not by $\pi _2: \cC\ra \cC_2$.
Indeed, over such a component $\pi _1^*\cL^1$ is 
trivial, while by stability  $\pi _2^*\cL^2$ is not trivial (here $\cL^1$ and $\cL^2$ are the 
line bundles measuring stability on the families $\cC_1$ and $\cC_2$, respectively). 
It follows that the initial families $(\cC_i\ra\Delta,\dots )$ are
isomorphic.
Hence $\cT_{g,k}(X_\Sigma, \beta)$ is separated by the valuative criterion.

\subsection{Completeness}

Let $R$ be a DVR over $\CC$ and let $K$ be its field of fractions. It suffices to show that a given
family of stable toric quasimaps over $\Delta^0:=\Spec(K)$ extends to a 
family of stable toric quasimaps over $\Delta:=\Spec( R)$.

First we introduce some terminology: we say that data
$$((C,p_1,\dots, p_k), \{\cP_j\}, \{u_\rho\})$$
as in Lemma \ref{qmap2} forms a {\it quasistable toric quasimap to $X_\Sigma$} if it satisfies all requirements
of a stable toric quasimap except that base-points are allowed at nodes and markings. Note that
a quasistable toric quasimap has a well-defined effective homology class
$\beta$ by Remark \ref{effective}.

The following lemma, whose proof 
is given at the end of this section, will be a key part of the argument.

\begin{Lemma}\label{lemmaB} Let $\cC\lra \Delta$ be a family of prestable $k$-pointed curves of genus $g$, 
with each fiber having automorphism group of dimension at most 1. Let
$\cP_1,\dots ,\cP_r$ be line bundles on $\cC$ and for each $\rho\in\Sigma(1)$, let $u_\rho$ be
a rational section of 
$$L_\rho= \bigotimes_{i=1}^r{\mathcal P}_i^{\otimes a_{i\rho}}$$
which is regular on $\cC|_{\Delta^0}$. Assume that this data forms a family of stable toric quasimaps of class $\beta$
over $\Delta^0$. Then there are unique modifications $\widetilde{L_\rho}$ of $L_\rho$ at points on the central fiber $C_0$ 
such that $u_\rho$ extend to regular sections of $\widetilde{L_\rho}$, and such that their base locus on the 
central fiber is a finite set. Furthermore, there is a contraction 
$$\begin{array}{ccc} \cC&\stackrel f\longrightarrow&\cC'\\ 
\;\; \;\;\;\;\; \searrow & &\swarrow \;\;\;\;\;\; \\
& \Delta &
\end{array} $$
which is an isomorphism over $\Delta^0$,
such that for each $\rho\in\Sigma(1)$ the push-forward
$f_*\widetilde{L_\rho}$ is a line bundle, the section $u_\rho$ descends to a section of $f_*\widetilde{L_\rho}$, and this data
is a family of
quasistable toric quasimaps of class $\beta$ over $\Delta$.
\end{Lemma}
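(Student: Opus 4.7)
My plan has three phases: extending the rational sections $u_\rho$ to regular sections of suitably twisted line bundles $\widetilde L_\rho$ on $\cC$; contracting those components of the central fiber $C_0$ that remain unstable for the extended data; and verifying that the pushforward to the contracted surface $\cC'$ is a family of quasistable toric quasimaps of class $\beta$. Since the arguments use Cartier divisors of rational sections, I first replace $\cC$ by its minimal resolution if necessary, introducing only finitely many exceptional $\PP^1$'s at $A_n$-singularities over nodes of $C_0$; these exceptional curves will be absorbed by the contraction, so I may assume throughout that $\cC$ is a regular two-dimensional scheme.

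For the extension, $u_\rho$ is a rational section of $L_\rho$ on the regular surface $\cC$, regular over $\Delta^0$, so its Cartier divisor has its vertical part entirely supported on $C_0$; write this vertical part as $V_\rho = \sum_i v_i^\rho C_0^i$. Setting $\widetilde L_\rho := L_\rho(-V_\rho)$ makes $u_\rho$ a regular section of $\widetilde L_\rho$ whose divisor has zero vertical part, so in particular $u_\rho$ is a nonzero rational section along every $C_0^i$. Uniqueness follows from minimality: any smaller twist leaves $u_\rho$ with a pole, while any larger effective twist forces $u_\rho$ to vanish identically on a whole component, violating the finiteness of the base locus required by the lemma.

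Next, let $T$ be the set of irreducible components $C_0^i$ (including those introduced by the resolution) that are rational with fewer than three special points and on which $\widetilde\cL := \bigotimes_\rho \widetilde L_\rho^{\otimes \alpha_\rho}$ has degree zero. By Remark \ref{effective} together with the generic nondegeneracy on every $C_0^i$ established in the previous paragraph, each $\beta_{C_0^i}$ is an effective homology class on $X_\Sigma$; and by the argument of Lemma \ref{indep}, an ample pullback of degree zero forces $\beta_{C_0^i}=0$, so for $i \in T$ every $\widetilde L_\rho$ has degree zero on $C_0^i$. Since $T$ is a disjoint union of trees of rational curves on the regular surface $\cC$, there is a simultaneous contraction $f:\cC \to \cC'$ over $\Delta$, flat over $\Delta$ and an isomorphism over $\Delta^0$. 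By cohomology and base change, $f_*\widetilde L_\rho$ is a line bundle on $\cC'$ whose pullback along $f$ is $\widetilde L_\rho$; the sections $u_\rho$ are fiberwise constant along $f$ (being sections of trivial line bundles on trees of $\PP^1$'s) and hence descend, and no component of $C_0' = f(C_0)$ is an unstable rational tail/bridge with trivial $\widetilde\cL$, so the data on $\cC'$ will be a family of quasistable toric quasimaps of class $\beta$ once the compatibilities are checked.

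The subtlest point is verifying that the compatibilities $\phi_m$ pass through the modification and contraction. On $\cC$, $\phi_m$ exhibits $\bigotimes_\rho L_\rho^{\otimes\langle m,\rho\rangle}$ as $\cO_\cC$ via the pullback of the character $\chi^m$, which as a rational function has a priori a nontrivial vertical divisor. For the isomorphism to extend over $\cC'$, the vertical coefficient of $\chi^m$ along every noncontracted $C_0^i$ must vanish; equivalently, the vector $(v_i^\rho)_\rho$ must lie in the image of $A^T:\ZZ^r\to\ZZ^{\Sigma(1)}$, which by the exact sequence (\ref{exact seq}) is the statement that $\sum_\rho \langle m,\rho\rangle v_i^\rho=0$ for every $m$. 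This follows from the observation that, by minimality, no single $u_\rho$ vanishes identically on a noncontracted $C_0^i$, so the image of such a component is not entirely contained in any toric divisor; hence $\chi^m$ pulled back is a nonzero rational function along $C_0^i$ and its $C_0^i$-coefficient is $0$. Carefully assembling these threads — and showing that any residual discrepancy is concentrated on $T$ and therefore killed by $f_*$ — is the main technical content of the proof.
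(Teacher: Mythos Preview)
Your approach has a genuine gap in the compatibility argument of the last paragraph, and the gap is not merely expository: the claim that $(v_i^\rho)_\rho\in\mathrm{im}(A^T)$ for every noncontracted component $C_0^i$ is false in general.

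Concretely, take $X_\Sigma=\PP^1$ (so $r=1$, $A=(1\;1)$, and compatibility means $\widetilde L_1=\widetilde L_2$). Let $\cC$ be the blow-up of $\PP^1\times\Delta$ at $([0{:}1],0)$, with three markings on the strict transform $C_0^1$ and one on the exceptional curve $E$, so that $\dim\mathrm{Aut}(C_0)\le 1$. With $\cP=\cO(1)$ and $u_1=x$, $u_2=y$, one computes $\mathrm{ord}_E(u_1)=1$, $\mathrm{ord}_E(u_2)=0$, hence $(v_E^{(1)},v_E^{(2)})=(1,0)\notin\mathrm{im}(A^T)$. After your twist, $\widetilde L_1|_E\cong\cO_E(1)$ while $\widetilde L_2|_E\cong\cO_E$, so the two line bundles cannot arise from a common $\widetilde\cP$. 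Moreover, with polarization $\alpha_1=1,\alpha_2=0$ the degree of $\widetilde\cL|_E$ is $1$, so $E\notin T$ and is not contracted. Thus your output is not a quasimap on a noncontracted component. Your justification --- that no $u_\rho$ vanishes identically on $C_0^i$, hence ``the image is not contained in any toric divisor'' --- is circular: speaking of an image in $X_\Sigma$ presupposes that the $\widetilde L_\rho$ already satisfy the compatibilities, which is precisely what is in question. (In the example the \emph{actual} limiting map sends $E$ constantly to $[0{:}1]\in D_1$, so the geometric claim is also false.)

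The paper's proof avoids this difficulty entirely by a different mechanism: rather than modifying each $L_\rho$ separately, it uses the properness of $X_\Sigma$ and the normality of $\cC$ to extend the honest map $\cC|_{\Delta^0}\setminus\mathcal B^0\to X_\Sigma$ across the central fiber away from finitely many points, and then pulls back the universal $\Sigma$-collection from $X_\Sigma$. Since the universal collection on $X_\Sigma$ already carries the trivializations $\phi_m$, the pulled-back line bundles and sections are automatically compatible; Hartogs then extends over the remaining finite set. The contraction step is likewise handled via the map: on an unstable chain the map to $X_\Sigma$ is constant, so it factors through the contraction and one pulls back again. This is the ``key idea'' you are missing; your divisor-by-divisor twist cannot in general be made into a modification of the underlying $\cP_j$'s.
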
 

Now consider a family
$$(\pi^0:\cC^0\ra\Delta^0,\; p_1,\dots ,p_k:\delta^0\ra\cC^0,\; \cP_1^0,\dots,\cP_r^0,\; \{u_\rho\}_{\rho\in\Sigma(1)})$$
of stable toric quasimaps. 
As in \cite{MOP}, \S 6.3, we may assume that $\pi^0:\cC^0\ra\Delta^0$ has nonsingular, irreducible fibers and that it extends to
$$\pi:\cC\lra\Delta$$
such that $\cC$ is a nonsingular surface over $\Spec(\CC)$ and the special fiber $C_0=\pi^{-1}(0)$ is a prestable pointed
curve with $\dim \mathrm{Aut}(C_0)\leq 1$. We fix a $\pi$-relative ample line bundle $\cO_{\cC}(1)$.

By Corollary \ref{boundedness} we can find $m>>0$ such that 
$\pi_*((\cP_j^0)^\vee(m))$ is a vector bundle on $\Delta^0$ (hence trivial) for all $j$.  It follows that there are
surjections
$$\cO_{\cC^0}(-m)^{\oplus N}\twoheadrightarrow (\cP_j^0)^\vee,$$
and by dualizing we get
$$0\lra\cP_j^0\lra\cO_{\cC^0}(m)^{\oplus N}\lra Q_j^0\lra 0$$
The quotient $Q_j^0$ has Hilbert polynomial $h_j$ depending only on the degree of $\cP_j^0$. By properness of the 
relative quot functor $\cQ uot_{h_j}(\cO(m)^{\oplus N})$, we obtain an extension 
$$0\lra\cP_j\lra\cO_\cC(m)^{\oplus N}\lra Q_j\lra 0$$
of the above exact sequence to
$\cC$ such that $Q_j$ is $\pi$-flat. 
After possibly replacing $\cP_j$ by its double dual, we may assume that $\cP_j$ is a line
bundle.

We have therefore constructed a family to which Lemma \ref{lemmaB} can be applied 
to obtain a quasistable toric quasimap 
$$(\cC'\lra\Delta,\dots ). $$

To finish the proof we need to remove the possible base-points at nodes or markings in the
central fiber.
Let $q\in C'_0$ be a node or marking
which is a base point and let $\widetilde{\cC'}\lra\cC'$ be the blow-up of $\cC'$ at $q$.  
The central fiber has an additional component, which is a $\PP^1$ with two special
points on it. The pull-back to $\widetilde{\cC'}$ of the quasistable toric quasimap on $\cC'$ satisfies the assumptions of
Lemma \ref{lemmaB}. Hence we can find a modification (along the new component $\PP^1$ only) such that
the resulting family has finite base locus on the central fiber. By Remark \ref{effective}, the new component
has a homology class $\beta_{\PP^1}$ which is effective.
We claim that $\beta_{\PP^1}\neq 0$. Indeed, if not, then all $L_\rho$'s must be trivial on this component, with
the sections $u_\rho$ constant and satisfying the nondegeneracy condition at every point. But this is impossible,
since $q$ was assumed to be a base point.

We conclude that the modification gives a quasistable toric quasimap on  $\widetilde{\cC'}$. After 
replacing $\cC'$ by $\widetilde{\cC'}$, the procedure can now be repeated. Since the number 
of components with nonzero homology class of a quasistable toric quasimap of class $\beta$
is bounded, the process must produce after finitely
many steps a family of {\it stable} toric quasimaps. 
Therefore, to finish the argument we only need to give the proof of Lemma 4.2.1.

\vskip .3in

{\it Proof of Lemma \ref{lemmaB}}:
The key idea is the use of the properness of $X_\Sigma$.
Let $\mathcal{B} ^0$ be the base locus on $\cC |_{\Delta _0}$ of the quasimap. Since $\cC$ is normal and 
$X_\Sigma$ is proper, the induced map
from $\cC |_{\Delta _0} \setminus \mathcal{B}_0$ to $X_\Sigma$ can be extended 
to a map to $X_\Sigma$ from $\cC \setminus \overline{
\mathcal{B} ^0}$ except possibly on a finite subset 
$$B_0\subset C_0\setminus \overline{\mathcal{B} ^0},$$ 
where $\overline{\mathcal{B} ^0}$ is the closure
of $\mathcal{B}^0$ in $\cC$. 

Via pull-back by this map, we obtain quasimap data
on $\cC \setminus (\overline{\mathcal{B} ^0}\cup B_0)$ 
(in the obvious sense, generalizing the notion of quasimaps to families of possibly  
nonproper nodal curves). By construction, it agrees with the quasimap data on $\cC |_{\Delta _0}$, 
so by gluing we get a quasimap 
$$((\cC \setminus B'_0,p_1\dots, p_k), \{ \widetilde{\cP_i} \}, \{\widetilde{u_\rho}\}),\;\;\; B'_0=B_0\cup(\overline{\mathcal{B} ^0}\cap C_0)$$ 
on $\cC \setminus B'_0$.
Since $B'_0$ consists of
finitely many normal points, the line bundles $\widetilde{\cP_i}$ are defined globally on $\cC$ and by Hartog's Theorem
the sections $\widetilde{u_\rho}$ also extend to regular sections of the 
line bundles $\widetilde{L_\rho}$ on $\cC$. The nondegeneracy condition holds at the generic points
of components of the special fiber by construction.
The uniqueness is clear. 

The stability condition may fail on some components of the special fiber. However, by Remark
\ref{effective}, components of the central fiber have a well-defined class $\beta_C$ which is effective. 
A component is unstable if and only if it is a $\PP^1$
with two special points, all line bundles are trivial on it and the sections are constant.  
Such components appear in chains of rational curves in $C_0$.

Now, first of all, there is a family of semistable curves
$\cC '\lra \Delta$, with normal total space, which is obtained from $\cC$ by contracting all these rational components:
$$\begin{array}{ccc} \cC&\stackrel f\longrightarrow&\cC'\\ 
\;\; \;\;\;\;\; \searrow & &\swarrow \;\;\;\;\;\; \\
& \Delta &
\end{array} $$

Let $D$ a chain of $\PP^1$'s  contracted by $f$
and let $p=f(D)\in\cC'$. There is a small enough analytic (or \'{e}tale) neighborhood $p\in U'\subset\cC'$
such that on $U:=f^{-1}(U')$ the toric quasimap
defines an honest map
$$g:U\lra X_\Sigma$$
which is constant on the chain $D$. Hence $g$ factors through the contraction $U\lra U'$,
at least as a continuous map $g':U'\lra X_\Sigma$.
Since $U$ is normal at $p$, Hartog's Theorem implies that $g'$ is
a regular map. Now the pull-back of toric data via these $g'$'s (for all contracted chains) 
glues with the quasimap on
$\cC\setminus D$ to give the required quasistable toric quasimap on $\cC'$.
 \hfill$\Box$

\section{Virtual smoothness}\label{virt}  The goal of this section is
to explain why the obstruction theory provided by the construction of $\cTX$ given
in Theorem \ref{construction} is the natural one, in the sense that it extends the
obstruction theory of maps to $X_\Sigma$.
We use the formalism of perfect obstruction theory and  virtual classes
from \cite{BF}.

\subsection{Maps from a fixed curve} For a fixed nodal curve $C$, consider the moduli scheme $Mor_\beta(C,X_\Sigma)$
of maps to $X_\Sigma$ of class $\beta$. It has a perfect obstruction theory given by the complex
\begin{equation}\label{obstheory} (R^\bullet p_*f^*T_{X_\Sigma})^\vee,\end{equation}
where $$f:C\times Mor_\beta(C,X_\Sigma)\lra X_\Sigma$$ is the universal map and
$$p:C\times Mor_\beta(C,X_\Sigma)\lra Mor_\beta(C,X_\Sigma)$$ is the projection.

\subsection{Euler sequence} The tangent bundle of $X_\Sigma$ fits into an Euler sequence
\begin{equation}\label{Euler sequence 1}
0\lra\cO_{X_\Sigma}^{\oplus r}\lra\oplus_\rho\cO_{X_\Sigma}(D_\rho)\lra T_{X_\Sigma}\lra 0.
\end{equation}
The fastest way to see this is by taking the distinguished triangle of absolute and relative cotangent complexes
for the morphism of Artin stacks
$$X_\Sigma=[(\CC^{\Sigma(1)}\setminus Z(\Sigma))/(\CC^*)^r]\lra [\mathrm{pt}/(\CC^*)^r]=B(\CC^*)^r.$$
The first map in the sequence is the derivative of the action, hence it is induced by (the 
transpose of) the matrix $A$.

Let $$(\cL_\rho,u_\rho),\;\;\; \rho\in\Sigma(1)$$ be the universal line bundles and sections on the universal curve
$$\pi_\cT:\cC_\cT\lra\cTX.$$ 

We have a Cartesian diagram

$$\begin{array}{ccc} \cC_\cT&\lra &\cC_{\mathfrak{P}ic}\\ \\
\;\;\;\;\; \downarrow \pi_\cT& &\pi_{\mathfrak{P}ic}\downarrow \;\;\;\;\; \\ \\
\cTX&\stackrel\mu\lra &\mathfrak{P}ic^r_\beta
\end{array}$$
and the bundles $\cL_\rho$ are the pull-backs of the universal bundles $\cQ_\rho$ on
$\cC_{\mathfrak{P}ic}$.

The matrix $A$ gives a morphism
$$\cO_{\cC_\cT}^{\oplus r}\lra\oplus_\rho\cL_\rho,\;\;\;  1_j\mapsto \sum_\rho a_{j\rho}u_\rho,$$
which is injective since the sections $u_\rho$ satisfy the nondegeneracy condition. Hence, denoting the
quotient by $\cF$ 
we obtain an Euler sequence
on the universal curve $\cC_\cT$ as well:
\begin{equation}\label{Euler sequence}0\lra\cO_{\cC_\cT}^{\oplus r}\lra\oplus_\rho\cL_\rho\lra\cF\lra 0
\end{equation}
It induces a distinguished triangle
$$R^\bullet\pi_*(\cO_{\cC_\cT}^{\oplus r})\lra R^\bullet\pi_*(\oplus_\rho\cL_\rho)\lra R^\bullet\pi_*(\cF),$$
where $\pi=\pi_{\cT}$.

\subsection{Obstruction theory for $\cTX$} Recall from \S\ref{new-construction} that we have a diagram

\begin{equation}\begin{array}{ccc} \label{diagram2}
\cT=\cTX&\stackrel\iota\hookrightarrow &\cE\\ \\
 &\mu\searrow  &\downarrow q\\ \\
 &  &\mathfrak{P}ic^r_\beta\\ \\
  & &\; \downarrow \phi \\ \\
 & &  \fMgk\\
\end{array}\end{equation} 
with $\iota$ the embedding as the zero locus of a section of the bundle 
$$\cV:= q^*\left (\oplus_{\rho\in\Sigma(1)}\pi_*(\cV_\rho)\right )$$ from (\ref{bundle}).
(To unburden the notation, we drop from now on the subscripts for the universal curves
and the projections $\pi$; this should not lead to confusion.)

Hence we obtain a perfect obstruction theory 

$$E^\bullet_\cT:=[\cV^\vee |_{\cT}\lra\Omega^1_\cE |_{\cT}]$$
for $\cTX$. Since the map $q$ is smooth, with relative cotangent bundle
$$\Omega^1_{\cE/\mathfrak{P}ic}=\left (q^*\pi_*(\oplus_\rho \cQ_\rho(m_1))\right )^\vee$$ 
and $\cV_\rho$ is the cokernel of 
$$0\lra\cQ_\rho\lra\cQ_\rho(m_1),$$
it follows that 
the $\mu$-relative obstruction theory of $\cTX$ over $\mathfrak{P}ic^r$ is (quasi-isomorphic to)

$$ E^\bullet_{\cT/\mathfrak{P}ic^r}=\left( R^\bullet \pi_*(\oplus_\rho \cL_\rho)\right)^\vee .$$

Finally, using the Euler sequence (\ref{Euler sequence}) and the fact
that 
$$\mu^*L_{\phi}[1]=\left( R^\bullet\pi_*(\cO^{\oplus r})\right)^\vee ,$$ 
we obtain the $\nu$-relative obstruction theory
$$ E^\bullet_{\cT/\fMgk}=\left( R^\bullet \pi_*(\cF)\right)^\vee .$$

The restriction of the morphism $\nu=\phi\circ\mu$ to the open substack 
$\cTX^\circ$ parametrizing honest maps to $X_\Sigma$ has fibers $Mor_\beta(C,X_\Sigma)$. 
From the two Euler sequences (\ref{Euler sequence 1}) and (\ref{Euler sequence}) we see that the sheaf $\cF$ on $\cC|_{\cTX^\circ}$ is the
pull-back of the tangent bundle of $X_\Sigma$ via the universal map. Hence
the restriction of $ E^\bullet_{\cT/\fMgk}$ to 
$\cTX^\circ$ agrees with the relative perfect obstruction theory over $\fMgk$ given by (\ref{obstheory}).

Finally, we note that the virtual dimension of $\cTX$ is easily calculated to be
$$(1-g)(\dim  X_\Sigma-3)+k+\int_\beta c_1(T_{X_\Sigma}),$$
the same as the virtual dimension of the moduli space of stable maps $\overline{M}_{g,k}(X_\Sigma,\beta)$.

\section{Quasimap integrals}

\subsection{Evaluation maps and descendent integrals} Let $$\pi:\cC\lra\cT_{g,k}(X_\Sigma,\beta)$$ be the universal curve, with
sections $$s_i:\cT_{g,k}(X_\Sigma,\beta)\lra\cC,\;\;\; i=1,\dots,k.$$
The cotangent line bundles at the markings are defined as usual by pulling back the relative dualizing sheaf via the sections
$${\mathbb L}_i:=s_i^*(\omega_{\cC/\cT}) .$$
We put $\hat{\psi}_i:=c_1({\mathbb L}_i)$.

Since the base points cannot occur at markings, $\cT_{g,k}(X_\Sigma,\beta)$ comes
with the well-defined evaluation maps
$$\hat{ev}_i:\cT_{g,k}(X_\Sigma,\beta)\lra X_\Sigma , i=1,\dots ,k.$$

A system of quasimap descendent integrals is defined via integration against the virtual class: for cohomology classes
$\gamma_1,\dots ,\gamma_k\in H^*(X_\Sigma,\QQ)$ and nonnegative integers
$n_1,\dots,n_k$, we put
$$\langle \tau_{n_1}(\gamma_1),\dots,\tau_{n_k}(\gamma_k)\rangle_{g,k,\beta}^{quasi}:=
\int_{[\cT_{g,k}(X_\Sigma,\beta)]^{\mathrm{vir}}}\prod_{i=1}^k\hat{\psi}_i^{n_i}\hat{ev}_i^*(\gamma_i).$$
\bigskip

\subsection{Torus equivariant theory }

Let $\bT=\Hom(\ZZ^{\Sigma(1)},\CC^*)\cong (\CC^*)^l$ denote the ``big" torus acting on $X_\Sigma$.
and let
$$H^*_\bT({\mathrm {pt}},\CC)=H^*(B\bT,\CC)=\CC[\lambda_1,\dots,\lambda_l],$$
be the $\bT$-equivariant cohomology of a point.

The action of $\bT$ on $X_\Sigma$ induces an action on $\cTX$ for which the obstruction theory and the
virtual class are equivariant. Hence, with the same definition, we get a ($\CC[\lambda_1,\dots,\lambda_l]$-valued)
system of $\bT$-equivariant quasimap descendent integrals. The description of the fixed point loci in
$\cTX$ is similar to the one for moduli stable quotients in \cite{MOP}. Together with the
expressions for the obstruction theory derived in \S \ref{virt}, this allows for computing quasimap integrals via the virtual localization formula.
\subsection{Splitting and Cohomological Field Theory} 
The boundary divisors of $\cTX$ exhibit the same recursive structure as in the case of moduli 
of stable maps.
It is straightforward to check that the virtual classes are compatible as well, so that
the analogue of the Splitting Axiom in Gromov-Witten theory holds for quasimap integrals.
Moreover, via the forgetful map
$$\cTX\lra\overline{M}_{g,k}$$
we can define ``quasimap classes" and therefore our virtually smooth moduli spaces
give rise to a Cohomological Field Theory (CohFT for short) on $H^*(X_\Sigma ,\QQ)$. 

\subsection{No contraction maps} Note that the universal curve $\cC_\cT$ over $\cTX$
is {\it not} isomorphic to $\cT_{g,k+1}(X_\Sigma,\beta)$. In fact, in general there are no contraction maps
which forget one of the markings between the quasimap moduli spaces. This means that the proofs of the 
fundamental class and divisor axioms in Gromov- Witten theory cannot be extended and some easy examples show that
these axioms do not hold for quasimap integrals. (We believe, however, that the quasimap and Gromov-Witten integrals coincide 
when $X_\Sigma$ is Fano, see Conjecture \ref{I=J}, so that the axioms themselves hold in this case.)

\begin{EG}

The Hirzebruch surface $\mathbb{F} _n$ is defined by a fan in $N_\RR = \RR ^2$ with $\rho _1 = (1,0),
\rho _2 = (-1,-n), \rho _3 = (0,1), \rho _4 = (0,-1)$. Hence, the matrix $A$ is given
by \[ \left(\begin{array}{cccc} 1 & 1 & n & 0 \\
                                            0 & 0 & 1 & 1 \end{array}\right). \]
We let $D_i=D_{\rho_i}$. Note that $D_4\cdot D_4 = -n$ and that $\mathbb{F}_n$ is not Fano for $n\geq 2$.
Let $n=2$. By the string axiom, in Gromov-Witten theory,
$$\langle 1, D_2 \rangle _{0,2,\beta =D_4} = 0.$$ However, 
one computes using the virtual localization formula that
$$\langle 1, D_2 \rangle _{0,2,\beta = D_4}^{quasi} = -1,\;\;\;
\langle 1, D_2, D_2 \rangle _{0,3, \beta = D_4} ^{quasi} = 0.$$ 
\end{EG}
Hence the
quasimap CohFT is in general different from the Gromov-Witten CohFT.

\section{Quasimaps with one parametrized component and the big $I$-function of $X_\Sigma$}\label{big-I-fcn}

\subsection{Reminder on $J$-functions} For a smooth projective variety $X$
(we assume for simplicity that the odd cohomology of $X$ vanishes), a formalism
due to Givental, see \cite{Giv2}, \cite{CG}, encodes the genus zero Gromov-Witten theory (with descendents) of $X$ into
a Lagrangian cone $\mathfrak{L}_X$ with special properties in a symplectic vector space. In particular,
the theory can be recovered from a special point $J_X$ of this cone, which is a generating function for
Gromov-Witten invariants of $X$ with descendent insertions only at the first marking. Precisely, the
{\it (big) $J$-function of $X$} is defined to be the cohomology valued function

\begin{equation}\label{J-fcn}
J_X({\bf t},z):=1+\frac{\bf t}{z}+\sum_{\beta}Q^{\beta} 
\sum_{k\geq 0}\frac{1}{k!}(ev_1)_*\left (\frac{[\overline{M}_{0,k+1}(X,\beta)]^{\mathrm{vir}}}{z(z-\psi)}\prod_{j=2}^{k+1}ev_j^*({\bf t})\right ).
\end{equation}
Here ${\bf t}\in H^{*}(X,\CC)$, $z$ is a formal variable, $\beta$ runs over 
the integral points in the cone of effective curves on $X$, $Q^\beta$ are Novikov variables, 
$\overline{M}_{0,k+1}(X,\beta)$ is the moduli space of stable maps to $X$, $ev_i$ are the evaluation maps
at the markings, and $\psi=\psi_1$ is the cotangent class at the first marking. The unstable terms with $\beta=0$ and $k\leq 1$ are omitted from the sum.

The small $J$-function is obtained by restricting ${\bf t}$ to $H^{\leq 2}(X,\CC)$. It is a generating function for $1$-point genus zero invariants of $X$.

There is a well-known description of the $J$-function in terms of equivariant residues on the so-called {\it graph spaces}
$$G_k(X,\beta):=\overline{M}_{0,k}(\PP^1\times X, (1,\beta)).
$$
Namely, consider the $\CC^*$-action on the graph space induced by the action on the factor $\PP^1$ in $\PP^1\times X$.
Given a stable map 
$$(C,p_2,\dots ,p_{k+1}, f:C\lra \PP^1\times X)$$
(the unusual numbering of markings is chosen for later matching with the formula \ref{J-fcn}),
$C$ has a distinguished component $C_0$ such that $q_{\PP^1}\circ f |_{C_0}$ is an isomorphism --
here $q_{\PP^1}$ and $q_X$ are the projections from $\PP^1\times X$ to the factors.
We identify $C_0$ with $\PP^1$ via the isomorphism above. In particular, we have the points $0,\infty\in C_0$.

A stable map is fixed by the $\CC^*$-action if an only if it satisfies the following properties:

\vskip.1in

$(G1)$ $C_0\cap \overline{(C\setminus C_0)}\subset\{ 0,\infty\}$.

$(G2)$ There are no markings on $C_0\setminus \{0,\infty\}$.

$(G3)$  The map $q_X\circ f |_{C_0}$ is constant. In other words, the class $\beta$ is concentrated
above $0$ and $\infty$.
\vskip.1in

The ``components" of the fixed point locus (the quotation marks indicate that they need not be connected components) are therefore all
isomorphic to products
$$\overline{M}_{0,k_1+1}(X,\beta_1)\times_X \overline{M}_{0,k_2+1}(X,\beta_2).$$
We'll be interested in only one such component, denoted $F_0$, for which $C_0\cap \overline{(C\setminus C_0)}=\{ 0\}$
and $C\setminus C_0$ contains all markings. We have 
$$F_0\cong \overline{M}_{0,k+1}(X,\beta),$$
with the additional marking $p_1$ parametrizing the point on $\overline{(C\setminus C_0)}$ where $C_0$ is attached.

When restricted to $F_0$, the $\CC^*$-equivariant obstruction theory of the graph space has fixed part equal to
the usual obstruction theory of $\overline{M}_{0,k+1}(X,\beta)$, while the moving part (i.e., the virtual normal bundle of $F_0$)
has Euler class

$$e(N_{F_0/G}^{\mathrm{vir}})=z(z-\psi).$$
Here $z$ is the generator of the $\CC^*$-equivariant cohomology of a point,
$$H^*_{\CC^*}(\mathrm{pt},\CC)=\CC[z],$$
and $\psi$ is the cotangent line class at the first marking.
It follows that the class
$$\frac{[\overline{M}_{0,k+1}(X,\beta)]^{\mathrm{vir}}}{z(z-\psi)}\prod_{j=2}^{k+1}ev_j^*({\bf t})$$
from the formula (\ref{J-fcn}) is precisely the residue at $F_0$ of the class
$$[G_k(X,\beta)]^{\mathrm{vir}}\prod_{j=2}^{k+1}ev_j^*({\bf t})$$
in the virtual localization formula of \cite{GP}.

A theorem of Givental, \cite{G}, states that when $X=X_\Sigma$ is a smooth projective toric variety with $c_1(T_{X_\Sigma})\geq 0$\footnote{This restriction
has been subsequently removed in \cite{Iritani}. The result has also been 
recently reproved and generalized to toric fibrations in \cite{Brown}.}, its small $J$-function
can be calculated in terms of another cohomology-valued function $I_{X_\Sigma}$ which is also given by
a certain $\CC^*$-equivariant residue on a moduli space of toric quasimaps from $\PP^1$ to $X_\Sigma$. We will call this function
the {\it small} I-function of $X_\Sigma$ (its precise definition will be recalled in Definition \ref{small-I} below). 

Our goal here is to show how moduli spaces of stable toric quasimaps can be used to give a geometric
definition of the {\it big} $I$-function of a toric variety and to present some results and conjectures relating it
with the big $J$-function.

\subsection{Stable toric quasimaps with one parametrized component} 
Assume we are given a toric variety $X_\Sigma$ with a polarization as in \S 2, 
integers $g,k\geq 0$, an effective class $\beta\in H_2(X_\Sigma,\ZZ)$, and a fixed
nonsingular projective curve $D$. Let ${\bf T}$ be the torus acting on $X_\Sigma$. The theory developed in this
subsection should be understood as the $\bT$-equivariant theory. In particular, Conjecture \ref{I=J} and 
Theorem \ref{proj space} below express relationships between the $\bT$-equivariant quasimap integrals and 
the $\bT$-equivariant Gromov-Witten invariants of $X_\Sigma$.

However, since the torus $\bT$ plays no role in
the arguments, it will be dropped from the notation.

\begin{Def} \label{parametrized qmap}
A stable, $k$-pointed, toric quasimap of genus $g$ and class $\beta$ to $(X_\Sigma,D)$ is specified by the data
$$( (C,p_1,\dots ,p_k), \{ {\mathcal P}_i\; |\; i=1,\dots r\}, \{ u_\rho\}_{\rho\in\Sigma(1)}, \varphi),$$ 
where

\begin{itemize}

\item $(C,p_1,\dots ,p_k)$ is a connected, at most nodal, projective curve of genus $g$, and $p_i$ are distinct nonsingular points of $C$,

\item ${\mathcal P}_i$ are line bundles on $C$, of degrees $f_i:=\int_\beta c_1({\mathcal L}_i)$,

\item $u_\rho \in \Gamma (C, L_\rho )$ are global sections of the line bundles $$L_\rho:=\bigotimes_{i=1}^r{\mathcal P}_i^{\otimes a_{i\rho}}, $$

\item $\varphi : C\lra D$ is a regular map,

 \end{itemize}
 subject to the conditions:
 \begin{enumerate}
 
 \item (parametrized component) $\varphi_*[C]=[D]$. Equivalently, there is a distinguished component $C_0$ of $C$ such that
 $\varphi$ restricts to an isomorphism $C_0\cong D$ and $\varphi (C\setminus C_0)$ is zero-dimensional (or empty, if $C=C_0$). 
  
 \item (nondegeneracy) There is a finite (possibly empty) set of nonsingular points $B\subset C$,
disjoint from the markings on $C$,
such that for every $y\in C\setminus B$,  there exists a maximal cone $\sigma \in \Sigma _{\mathrm{max}}$ 
with $u_\rho (y)\ne 0$, $\forall \rho \not\subset \sigma$,

\item (stability) $\omega _{\tilde{C}}(p_1+\dots +p_k) \ot {\mathcal L}^\epsilon $ is ample for every rational $\epsilon > 0$, where
$\tilde{C}$ is the closure of $C\setminus C_0$. 

  \end{enumerate}

  Here the line bundle $\cL$
  in the stability condition is
  $$\cL=\otimes_{\rho\in\Sigma(1)}(\otimes_{j=1}^r \cP_j^{\otimes a_{j\rho}})^{\otimes \alpha_\rho} .$$

\end{Def}

We denote by

$$\cT_{g,k}(X_\Sigma,\beta; D)$$
the stack parametrizing the stable toric quasimaps in Definition \ref{parametrized qmap}.
Note that it is empty if $g< g(D)$. However, since there is no stability imposed on the distinguished component, the 
inequality $2g-2+k\geq 0$ is not required anymore.
By straightforward extensions of the arguments in \S 2-\S 5 we obtain the following.

\begin{Thm} 
$\cT_{g,k}(X_\Sigma,\beta; D)$ is a proper Deligne-Mumford stack of finite type, with a perfect obstruction theory.
\end{Thm}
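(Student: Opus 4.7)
The plan is to mimic the construction and arguments of \S 3--\S 5 verbatim, substituting the stack of prestable $k$-pointed curves $\fMgk$ with the Artin stack $\mathfrak{M}_{g,k}(D)$ of prestable $k$-pointed curves equipped with a parametrization $\varphi: C \to D$ satisfying $\varphi_*[C] = [D]$, and replacing the stability condition with its variant involving the closure $\tilde{C}$ of $C \setminus C_0$. The stack $\mathfrak{M}_{g,k}(D)$ is a smooth Artin stack, since the datum $(C,\varphi)$ is equivalent to specifying a finite collection of attaching points on $D$ together with a prestable $k$-pointed (possibly disconnected) curve $\tilde{C}$ glued to $D$ along them, and such data deform without obstruction.

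First I would prove an analogue of Corollary \ref{boundedness-curve}: the parametrized component $C_0 \cong D$ is rigid, components of $\tilde{C}$ carrying nonzero homology class are bounded in number since such classes are effective by Lemma \ref{indep} and sum to $\beta$, and the remaining rational components of $\tilde{C}$ are controlled by the stability condition applied to $\tilde{C}$. This singles out an open substack $\fS \subset \mathfrak{M}_{g,k}(D)$ of finite type through which the natural forgetful map factors, and Corollary \ref{boundedness} carries over unchanged. I would then reproduce the construction of \S 3.2 word for word: form the relative Picard stack $\mathfrak{P}ic^r$ over $\fS$, restrict to the open substack $\mathfrak{P}ic^r_\beta$ cut out by the degree and cohomology-vanishing conditions, build the total space $\cX \to \mathfrak{P}ic^r_\beta$ of $\oplus_\rho \pi_*\cQ_\rho(m_1)$, and cut out the smooth open Deligne--Mumford substack $\cE \subset \cX$ using the new nondegeneracy and stability conditions. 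Then $\cT_{g,k}(X_\Sigma,\beta;D)$ is realized as the zero locus in $\cE$ of the tautological section of $\cV = q^*\oplus_\rho\pi_*\cV_\rho$, which yields simultaneously the Deligne--Mumford property, finite type, and a perfect obstruction theory.

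For properness, the separatedness argument of \S 4.1 applies after observing that the parametrization $\varphi$ is preserved by any isomorphism over $\Delta^0$ and that the rigidity of $C_0 \cong D$ identifies the distinguished components of the central fibers; the rest of the comparison is identical. Completeness follows the template of \S 4.2: given a family over $\Delta^0$, the map $\varphi^0$ extends to a semistable reduction $\cC \to \Delta$ by the properness of $D$, after which Lemma \ref{lemmaB} together with the blow-up/contraction procedure extends the line bundles and sections, with all contractions performed inside $\tilde{C}$ since $C_0$ cannot be unstable. The main obstacle in the program is the correct formulation and verification that $\mathfrak{M}_{g,k}(D)$ is a smooth Artin stack of the required sort and that the open substack $\fS$ has finite type; once this foundational point is settled, the arguments of \S 3--\S 5 adapt essentially verbatim.
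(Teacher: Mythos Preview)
Your proposal is correct and takes essentially the same approach as the paper: the paper's proof is literally the single sentence ``By straightforward extensions of the arguments in \S 2--\S 5 we obtain the following,'' and you have supplied exactly those extensions. For the foundational point you flag---that $\mathfrak{M}_{g,k}(D)$ is a smooth Artin stack---the paper (in the case $D=\PP^1$, $g=0$ which is the only one it subsequently uses) identifies this stack with the Fulton--MacPherson configuration space $\widetilde{\PP^1[k]}$ and cites \cite{KKO}, \S 2.8 for its smoothness.
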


Even though we have given the definition and the existence result in the general context, our interest is in the
case $g=0$ and (necessarily) $D=\PP^1$, to which we restrict from now on. These are the quasimap analogues of
the graph spaces in the usual Gromov-Witten theory.

The underlying curve for
a point in $\cT_{0,k}(X_\Sigma,\beta; \PP^1)$ is a $k$-pointed 
tree of rational curves with one parametrized component $C_0\cong\PP^1$, that is, a
point in the stack $\widetilde{\PP^1[k]}$, the Fulton-MacPherson
space of  (not necessarily stable) configurations of $k$ distinct points on $\PP^1$. This is a smooth Artin
stack of infinite type (see \S 2.8 in \cite{KKO}).
 
Forgetting the rest of the data we obtain a morphism of stacks
$$\nu:\cTXP\lra \widetilde{\PP^1[k]}.$$ As in \S \ref{virt}, one sees easily that we have an Euler sequence
$$0\lra\cO_{\cC}^{\oplus r}\lra\oplus_\rho\cL_\rho\lra\cF\lra 0$$
on the universal curve
$$\pi: \cC\lra \cTXP$$ and that the $\nu$-relative perfect obstruction theory is 
given by
$$\left( R^\bullet \pi_*\cF\right )^\vee .$$
It follows that the virtual dimension is
$$\mathrm{vdim}\; \cTXP=\dim\; X_{\Sigma}+k+\int_\beta c_1(T_{X_\Sigma}).$$

\begin{EG} The moduli space $\cT_{0,0}(X_\Sigma,\beta ;\PP^1)$ is the toric compactification of Givental
and Morrison-Plesser of $Mor_\beta(\PP^1,X_\Sigma)$.
\end{EG}

\begin{Rmk} It is obvious that the graph space (for an arbitrary smooth projective $X$) can be viewed as the moduli space 
$$\overline{M}_{0,k}(X,\beta;\PP^1)$$
of stable
maps to $X$ with one parametrized component, whose points are described as
$$(C,p_1,\dots,p_k,f:C\ra X,\varphi :C\ra\PP^1),\;\; f_*[C]=\beta,\;\;\varphi_*[C]=[\PP^1].$$
Forgetting the map $f$ gives a morphism to
$\widetilde{\PP^1[k]}$, for which the relative obstruction theory is given by
the complex $\left( R^\bullet \pi_*(f^*T_X)\right)^\vee$.
We also have the smooth morphism of stacks
$$\widetilde{\PP^1[k]}\lra\mathfrak{M}_{0,k}$$
which forgets the map $\phi$. Its relative cotangent bundle is given by $\left( R^0\pi_*\varphi^*T_{\PP^1}\right )^\vee$.
It follows that the relative obstruction theory for the
composed forgetful map to $\mathfrak{M}_{0,k}$ is the obstruction theory for the graph space.
\end{Rmk}

\begin{Prop} Assume that $X_\Sigma$ has the property that the divisors $D_\rho$ are all nef. 
Then $R^1\pi_*\cF=0$.
\end{Prop}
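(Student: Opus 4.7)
The plan is to reduce to a fiberwise statement and then to a standard vanishing. Applying $R\pi_*$ to the Euler sequence $0 \to \cO_\cC^{\oplus r} \to \bigoplus_\rho \cL_\rho \to \cF \to 0$ and using that $R^2\pi_* = 0$ (since $\pi$ has relative dimension one) produces a surjection $\bigoplus_\rho R^1\pi_*\cL_\rho \twoheadrightarrow R^1\pi_*\cF$, so it is enough to show each $R^1\pi_*\cL_\rho$ vanishes. By cohomology and base change this reduces in turn to proving $H^1(C,L_\rho)=0$ on every geometric fiber $C$ of $\pi$.

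Fix such a $C$ and a $\rho$. Since the arithmetic genus is zero, $C$ is a tree of $\PP^1$'s, consisting of the parametrized component $C_0 \cong \PP^1$ glued to finitely many rational tails. For every irreducible component $C' \subseteq C$, I claim $\deg L_\rho|_{C'} \ge 0$. Indeed, Lemma \ref{indep} (via Remark \ref{effective}) shows that the class $\beta_{C'} \in H_2(X_\Sigma,\ZZ)$ is effective; its proof uses only nondegeneracy at the generic point of $C'$, which Definition \ref{parametrized qmap}~(2) imposes on every component, including $C_0$. Since $D_\rho$ is nef by hypothesis,
$$\deg L_\rho|_{C'} \;=\; \beta_{C'} \cdot D_\rho \;\ge\; 0.$$

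It then remains to invoke the standard fact that on a tree of $\PP^1$'s, any line bundle $L$ whose restriction to each irreducible component has nonnegative degree satisfies $H^1(C,L)=0$. One proves this by induction on the number of components: peel off a terminal $\PP^1$-leaf $C_1$ attached to $C'' = \overline{C \setminus C_1}$ at a single node $p$, tensor the normalization sequence $0 \to \cO_C \to \cO_{C_1} \oplus \cO_{C''} \to \cO_p \to 0$ with $L$, and take cohomology; because $L|_{C_1} \cong \cO_{\PP^1}(d)$ with $d \ge 0$, evaluation at $p$ is surjective on $H^0$, giving $H^1(C,L) \cong H^1(C'',L|_{C''})$ to which the inductive hypothesis applies. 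There is no real obstacle in the argument; the only small point to keep in mind is the effectivity of $\beta_{C_0}$ on the parametrized component, where stability is waived, and this is already handled by Remark \ref{effective}.
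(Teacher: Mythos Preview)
Your proof is correct and follows essentially the same approach as the paper: reduce via the Euler sequence to $H^1(C,\oplus_\rho L_\rho)=0$, and obtain this from $\deg L_\rho|_{C'}\ge 0$ on every component (which follows from effectivity of $\beta_{C'}$ together with nefness of $D_\rho$). Your write-up is more detailed---you spell out the base-change step, the induction on trees of $\PP^1$'s, and the point about the parametrized component---but the paper's proof is just a terser version of the same argument.
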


\begin{proof} The assumption implies that given a
stable quasimap 
$$( (C,p_1,\dots ,p_k), \{ {\mathcal P}_i\; |\; i=1,\dots r\}, \{ u_\rho\}_{\rho\in\Sigma(1)}, \varphi),$$ 
we have
$\deg (L_\rho |_{C'})\geq 0$ for every irreducible component $C'$ of $C$. Hence 
$$H^1(C,\oplus_\rho L_\rho)=0$$
and the result follows from the Euler sequence.
\end{proof}

\begin{EG} If $X_\Sigma$ is a product of projective spaces then the moduli spaces
$\cT_{0,k}(X_\Sigma, \beta)$ and $\cTXP$ are smooth, irreducible Deligne-Mumford stacks
and the virtual classes are the usual fundamental classes.

Also, the argument in \S 3.3 of \cite{MOP} shows that the
same is true for the elliptic moduli space $\cT_{1,0}(X_\Sigma, \beta)$
(still under the assumption that $X_\Sigma$ is a product of projective spaces).
\end{EG}

The $\CC^*$-action on $\PP^1$ induces an action on $\cTXP$, whose fixed loci have
descriptions similar to the ones in the case of graph spaces. Namely, a stable quasimap

$$( (C,p_2,\dots ,p_{k+1}), \{ {\mathcal P}_i\; |\; i=1,\dots r\}, \{ u_\rho\}_{\rho\in\Sigma(1)}, \varphi)$$
is fixed by $\CC^*$ if and only if it satisfies the properties
\vskip.1in

$(Q1)$ $C_0\cap \overline{(C\setminus C_0)}\subset\{ 0,\infty\}$; here $C_0$ is identified with $\PP^1$ via $\varphi$.

$(Q2)$ There are no markings on $C_0\setminus \{0,\infty\}$.

$(Q3)$ The curve class $\beta$ is ``concentrated at $0$ or $\infty$". Precisely, this means the following: there are no
base points on $C_0\setminus \{0,\infty\}$ and the resulting map
$$C_0\setminus \{0,\infty\} \lra X_\Sigma$$
is constant.
\vskip.1in

Again, we are only interested in the component $\hat{F_0}$ of the fixed point locus for which the curve class $\beta$
is concentrated only at $0\in C_0$, i.e., the component parametrizing $\CC^*$-fixed quasimaps for which
$C_0\cap \overline{(C\setminus C_0)}=\{0\}$ and there is no base point or marking at $\infty$. The cases $k=0$ and $k\geq 1$ exhibit different behavior.

{\it Case $k\geq 1$}:  We have $\hat{F_0}\cong \cT_{0,k+1}(X_\Sigma,\beta)$ and the (virtual) codimension in $\cTXP$ is equal to $2$.

{\it Case $k=0$}: In this case the only parameter for the $\CC^*$-fixed stable quasimap is the image of $C_0\setminus \{0\} \lra X_\Sigma$. 

We have $\hat{F_0}\cong X_\Sigma$ if there are no invariant divisors $D_\rho$ which intersect negatively the curve class $\beta$
and 
$$\hat{F_0}\cong\bigcap_{\{\rho\;|\; \int_\beta c_1(\cO(D_\rho))<0\}}D_\rho$$
in general. The virtual codimension is equal to
$$\int_\beta c_1(T_{X_\Sigma})+\#\{ \rho\;|\; \int_\beta c_1(\cO_{X_\Sigma}(D_\rho))<0\}.$$
(Note that the moduli space $\cT_{0,1}(X_\Sigma,\beta)$ doesn't exist!)

In the first case, we have the following.

\begin{Lemma}\label{residue} The restriction of the (absolute) obstruction theory of $\cTXP$ to $\hat{F_0}$ 
has fixed part equal to the 
(absolute)
obstruction theory of $\cT_{0,k+1}(X_\Sigma,\beta)$. Furthermore, the Euler class of the virtual normal bundle is
$$e(N_{\hat{F_0}/\cT}^{\mathrm{vir}})=z(z-\hat{\psi}),$$
where $\hat{\psi}=\hat{\psi_1}$.

\end{Lemma}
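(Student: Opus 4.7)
The plan is to decompose the obstruction theory of $\cTXP$ restricted to $\hat F_0$ under the $\CC^*$-action, separating the $\mu$-relative piece over $\widetilde{\PP^1[k]}$ from the contribution of the base. First I would realize $\hat F_0\cong\cT_{0,k+1}(X_\Sigma,\beta)$ by sending a fixed quasimap $((C,p_2,\dots,p_{k+1}),\{\cP_i\},\{u_\rho\},\varphi)$ to its restriction to $\widetilde C=\overline{C\setminus C_0}$, with the node $q=C_0\cap\widetilde C$ as the new first marking. Since on $\hat F_0$ the entire class $\beta$ sits on $\widetilde C$, each $\cP_i|_{C_0}$ has degree $0$ and is thus trivial on $\PP^1$; compatibility of $\CC^*$-equivariant structures at the node (trivial weight from the $\widetilde C$-side) then pins down the character, making $\cP_i|_{C_0}$ equivariantly trivial.

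I would then compute $E^\bullet_{\cT/\widetilde{\PP^1[k]}}|_{\hat F_0}\simeq(R^\bullet\pi_*\cF)^\vee$ via the normalization sequence for the universal curve $\cC=\cC_0\cup\widetilde\cC$,
$$0\to\cF\to\cF|_{\cC_0}\oplus\cF|_{\widetilde\cC}\to\cF|_q\to 0.$$
The Euler sequence (\ref{Euler sequence}) restricted to $C_0$ yields $\cF|_{C_0}\cong\cO_{C_0}^{\oplus n}$ equivariantly, so $\pi_*(\cF|_{\cC_0})\to\cF|_q$ is the evaluation of constant sections, an isomorphism of equivariantly trivial rank-$n$ bundles, and $R^1\pi_*(\cF|_{\cC_0})=0$. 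The long exact sequence collapses to a $\CC^*$-equivariant quasi-isomorphism $R^\bullet\pi_*\cF\simeq R^\bullet\pi_*(\cF|_{\widetilde\cC})$ with trivial weights; by \S\ref{virt}, the right-hand side is the relative obstruction complex of $\cT_{0,k+1}(X_\Sigma,\beta)$ over $\mathfrak{M}_{0,k+1}$. Hence the $\mu$-relative obstruction theory is entirely $\CC^*$-fixed on $\hat F_0$ and matches that of $\cT_{0,k+1}$.

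Finally, I would analyze $\mu^*L_{\widetilde{\PP^1[k]}}|_{\hat F_0}$. At our boundary point the smooth Artin stack $\widetilde{\PP^1[k]}$ has tangent space splitting into a $\CC^*$-fixed part of dimension $k-2$, consisting of deformations of the prestable $(k+1)$-pointed curve $\widetilde C$ and identified with $T\mathfrak{M}_{0,k+1}|_{\hat F_0}$, together with a two-dimensional weight-$z$ moving part: (i) deformation of the attaching point along $C_0$, contributing $T_0C_0\cong\CC_z$ with equivariant Euler class $z$, and (ii) smoothing of the node, contributing $T_0C_0\otimes T_q\widetilde C$ with equivariant Euler class $z-\hat\psi_1$; the dimension count $(k-2)+1+1=k$ confirms these exhaust the tangent. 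Assembling via the distinguished triangle $\mu^*L_{\widetilde{\PP^1[k]}}\to E^\bullet_\cT\to E^\bullet_{\cT/\widetilde{\PP^1[k]}}$, the fixed part of $E^\bullet_\cT|_{\hat F_0}$ fits in the same defining triangle as $E^\bullet_{\cT_{0,k+1}}|_{\hat F_0}$ and so coincides with it, while the moving part comes entirely from the base and yields $e(N^{\mathrm{vir}}_{\hat F_0/\cT})=z(z-\hat\psi_1)$. The main obstacle is the precise $\CC^*$-weight description of the tangent to $\widetilde{\PP^1[k]}$ at a boundary point; I would handle this by combining the smooth stratified structure of this Fulton-MacPherson-type stack from \cite{KKO}, \S 2.8, with a direct first-order deformation computation of the pair $(C,\varphi)$.
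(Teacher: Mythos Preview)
Your proposal is correct and follows essentially the same approach as the paper's proof: both factor through the forgetful morphism to $\widetilde{\PP^1[k]}$, identify the relative obstruction theory $(R^\bullet\pi_*\cF)^\vee$ restricted to $\hat F_0$ with the relative obstruction theory of $\cT_{0,k+1}(X_\Sigma,\beta)$ over $\mathfrak M_{0,k+1}$, and read off the virtual normal bundle from the embedding $\mathfrak M_{0,k+1}\hookrightarrow\widetilde{\PP^1[k]}$. The paper packages this as a single cartesian diagram with ``details left to the reader''; your normalization-sequence argument and explicit tangent-space decomposition are precisely those details. (One small notational point: in this section the paper calls the map to $\widetilde{\PP^1[k]}$ by $\nu$, not $\mu$.)
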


\begin{proof} Follows easily from the cartesian diagram (in the category of $\CC^*$-spaces)
$$\begin{array}{ccc}
\cT_{0,k+1}(X_\Sigma,\beta)&\hookrightarrow &\cTXP\\ \\
\downarrow & &\downarrow \\ \\
\mathfrak{M}_{0,k+1} &\hookrightarrow &\widetilde{\PP^1[k]}
\end{array} ,$$
the fact that the relative obstruction theories for the two vertical maps are both equal to $\left( R^\bullet\pi_*(\cF)\right)^\vee$,
and a straightforward computation for the equivariant Euler class of the normal bundle of $\mathfrak{M}_{0,k+1}$ in
$\widetilde{\PP^1[k]}$. Details are left to the reader.
\end{proof}

In the second case Givental has shown that
the restriction of the (absolute) obstruction theory of $\cT_{0,0}(X_\Sigma,\beta;\PP^1)$ to $\hat{F_0}$ has fixed part
equal to $T_{\hat{F_0}}$, while the moving part has Euler class
$$ e(N_{\hat{F_0}/\cT}^{\mathrm{vir}})=e(N_{\hat{F_0}/X_{\Sigma}})\prod_{\rho\in\Sigma(1)}\frac{\prod_{j=-\infty}^{\int_\beta D_\rho}(D_\rho+jz) }
{\prod_{j=-\infty}^0(D_\rho+jz)}.$$
Hence the push-forward of the residue of $[\cT_{0,0}(X_\Sigma,\beta;\PP^1)]^{\mathrm{vir}}$ at $\hat{F_0}$ via $i_0:\hat{F_0}\hookrightarrow X_\Sigma$
is
$$(i_0)_*\frac{1}{e(N_{\hat{F_0}/\cT}^{\mathrm{vir}})}=
\prod_{\rho\in\Sigma(1)}\frac{\prod_{j=-\infty}^0(D_\rho+jz)}{\prod_{j=-\infty}^{\int_\beta D_\rho}(D_\rho+jz)}.$$

Givental defined the small $I$-function of $X_\Sigma$ as the sum over the curve classes $\beta$
of these residues, up to an exponential factor. Precisely,

\begin{Def} \label{small-I} The small $I$-function is
$$ I_{X_\Sigma}^{\mathrm{small}}({\bf t},z):= e^{\frac{{\bf t}}{z}}\sum_{\beta}Q^\beta e^{\int_\beta {\bf t}}
\prod_{\rho\in\Sigma(1)}\frac{\prod_{j=-\infty}^0(D_\rho+jz)}{\prod_{j=-\infty}^{\int_\beta D_\rho}(D_\rho+jz)}$$
where ${\bf t}\in H^0(X_\Sigma,\CC)\oplus H^2(X_\Sigma,\CC).$

\end{Def}

The exponential factor is introduced in view
of Givental's result comparing $I^{\mathrm{small}}_{X_\Sigma}$ with the small $J$-function. This is because the string and divisor equations in Gromov-Witten
theory imply that the $k\geq 1,\; \beta\neq 0$ part of the big $J$-function (\ref{J-fcn}) collapses to 
$$\sum_{\beta\neq 0} Q^\beta (e^{{\bf t}/z}e^{\int_\beta{\bf t}}-1)(ev_1)_*\left( \frac{[\overline{M}_{0,1}(X,\beta)]^{\mathrm{vir}}}{z(z-\psi)}\right)
$$
when ${\bf t}$ is restricted to $H^{\leq 2}$, so that the small $J$-function is
$$J_{X_\Sigma}^{\mathrm{small}}=
e^{\frac{{\bf t}}{z}}\sum_{\beta}Q^\beta e^{\int_\beta {\bf t}}(ev_1)_*\left( \frac{[\overline{M}_{0,1}(X,\beta)]^{\mathrm{vir}}}{z(z-\psi)}\right).
$$

Hence the difference between the small $J$ and $I$ functions is simply that the push-forwards of residues at $F_0$ 
of the virtual class of the
spaces of stable maps with one parametrized component $\overline{M}_{0,0}(X_{\Sigma},\beta;\PP^1)$
are replaced with the residues at $\hat{F_0}$ of the virtual class of the analogous quasimap spaces
$\cT_{0,0}(X_\Sigma,\beta;\PP^1)$.

The following Definition and Conjecture were made jointly with Rahul Pandharipande.

\begin{Def} The big $I$-function of the toric variety $X_{\Sigma}$ is
\begin{align}
I_{X_\Sigma}=&1+\frac{{\bf t}}{z}+\sum_{\beta\neq 0}Q^\beta \prod_{\rho\in\Sigma(1)}\frac{\prod_{j=-\infty}^0(D_\rho+jz)}
{\prod_{j=-\infty}^{\int_\beta D_\rho}(D_\rho+jz)}\nonumber \\
&+\sum_{\beta}Q^{\beta} 
\sum_{k\geq 1}\frac{1}{k!}(\hat{ev}_1)_*\left (\frac{[\cT_{0,k+1}(X_\Sigma,\beta)]^{\mathrm{vir}}}{z(z-\hat{\psi})}\prod_{j=2}^{k+1}\hat{ev}_j^*({\bf t})\right ).\nonumber
\end{align}

\end{Def}

Note that the $k\geq 1$ terms are formally the same as the ones in (\ref{J-fcn}), while the $k=0$ part comes from the 
correspondence between small $I$ and $J$. Lemma \ref{residue} says that $I$ is equal to a sum of $\CC^*$-residues in
exactly the same way as $J$.

\begin{Conj}\label{I=J} The function $I_{X_\Sigma}$ is in the Lagrangian cone $\mathfrak{L}_{X_\Sigma}$
defined by the Gromov-Witten theory of $X_\Sigma$, so that there exists a change of variable
(``mirror transformation") $\tau({\bf t})$ with 
$$J_{X_\Sigma}({\bf t}, z)=I_{X_\Sigma}(\tau,z)+{\mathrm{linear}\; \mathrm{combination}\;
\mathrm{of}\; \mathrm{derivatives}\; \mathrm{of}\; }I_{X_\Sigma}(\tau,z). 
$$
Furthermore, if
$X_\Sigma$ is Fano, then $I_{X_\Sigma}({\bf t}, z)=J_{X_\Sigma}({\bf t},z)$.
\end{Conj}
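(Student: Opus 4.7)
The plan is to compare, via $\CC^*$-equivariant localization, the quasimap graph spaces $\cTXP$ with the Gromov-Witten graph spaces $G_k(X_\Sigma,\beta) = \Mgk(\PP^1 \times X_\Sigma,(1,\beta))$. Both admit the standard $\CC^*$-action on the parametrized $\PP^1$; the $J$-function is assembled from $\CC^*$-residues at the ``$F_0$''-type loci on the GW side, and by Lemma \ref{residue} the $k \geq 1$ terms of $I_{X_\Sigma}$ arise from the analogous $\CC^*$-residues at $\hat F_0$ on the quasimap side, while the $k=0$ terms are manufactured by hand precisely to mimic Givental's small $I$-function residues. The strategy is to recognize $I_{X_\Sigma}$ as a complete sum of $\CC^*$-residues of $[\cT_{0,k}(X_\Sigma,\beta;\PP^1)]^{\mathrm{vir}}$ over \emph{all} $\CC^*$-fixed components, reorganized via the classical localization identity $\sum_F \mathrm{Res}_F = 0$ in non-equivariant cohomology, so that $I_{X_\Sigma}$ becomes a computable push-forward from the quasimap graph space itself.

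To establish that $I_{X_\Sigma} \in \mathfrak L_{X_\Sigma}$, I would invoke the Givental-Coates-Iritani characterization: a cohomology-valued function lies on the Lagrangian cone if and only if its $\CC^*$-equivariant lifts, together with the basic quantum differential equations, exhibit the ``overruled'' recursion at every $\bT$-fixed point $p \in X_\Sigma$. Using the simultaneous $\bT \times \CC^*$-localization on $\cTXP$, the fixed loci decompose into pairs of trees attached at $0$ and $\infty$ above each torus-fixed $p$, with the contribution at $0$ recording the ``input'' at the vertex and the contribution at $\infty$ being a hypergeometric factor of the form $\prod_\rho \prod_j(D_\rho + jz)^{-1}$. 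One then verifies the recursion axiom by matching each residue of $I_{X_\Sigma}$ at a simple pole $z = \chi/d$ with the predicted derivative-of-$I$ term, exactly as in Brown's proof of the mirror theorem for toric bundles but now incorporating the full big-parameter $\mathbf{t}$. Once $I_{X_\Sigma}$ is on $\mathfrak L_{X_\Sigma}$, the existence of the mirror map $\tau(\mathbf{t})$ is automatic: the natural projection from $\mathfrak L_{X_\Sigma}$ onto $H^*(X_\Sigma)$ recording the $z^{-1}$-coefficient is a local isomorphism near $\mathbf{1} + \mathbf{t}/z$, so $I_{X_\Sigma}(\tau,z)$ and $J_{X_\Sigma}(\mathbf{t},z)$ project to the same input for a unique $\tau$ and then differ only by tangent vectors to $\mathfrak L_{X_\Sigma}$, i.e.\ $z$-derivatives of $I_{X_\Sigma}$.

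For the Fano case, the mirror transformation collapses to $\tau = \mathbf{t}$ by a standard degree-in-$z$ count. When $-K_{X_\Sigma} = \sum_\rho D_\rho$ is ample, every nonzero effective $\beta$ satisfies $\int_\beta c_1(T_{X_\Sigma}) \geq 2$, so each $k=0$ summand of $I_{X_\Sigma}$ is $O(z^{-2})$; for $k \geq 1$, the virtual dimension formula $\mathrm{vdim}\,\cT_{0,k+1}(X_\Sigma,\beta) = \dim X_\Sigma - 2 + (k+1) + \int_\beta c_1(T_{X_\Sigma})$ combined with the $1/(z(z-\hat\psi))$ factor forces the push-forward to begin in degree $z^{-2}$ as well. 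Hence $I_{X_\Sigma}(\mathbf{t},z) = 1 + \mathbf{t}/z + O(z^{-2})$, the same truncation as $J_{X_\Sigma}(\mathbf{t},z)$; since both lie on $\mathfrak L_{X_\Sigma}$ and project to the same input, uniqueness forces equality.

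The main obstacle, I expect, is establishing the Lagrangian cone membership in the general (non-Fano) case: the hypergeometric residues produced by $\bT \times \CC^*$-localization on $\cTXP$ must be reorganized as \emph{derivatives} of $I_{X_\Sigma}$ itself evaluated at shifted arguments --- a non-trivial formal-series identity governed by the combinatorics of graphs with base-points concentrated at $0$ and $\infty$ of the parametrized $\PP^1$. Rather than attempt a direct hypergeometric manipulation, I would try to deduce it from the forthcoming wall-crossing framework of \cite{CKM}, interpolating between stable maps ($\epsilon$ large) and stable quasimaps ($\epsilon \to 0$) via a one-parameter family of stability conditions, for which the associated change-of-variables on the Lagrangian cone is built into the construction.
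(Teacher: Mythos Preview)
The statement you are attempting to prove is labeled a \emph{Conjecture} in the paper, and the paper does not prove it. Immediately after stating it, the authors write ``We plan to address this conjecture elsewhere,'' and the only evidence offered is Theorem~\ref{proj space}, the special case $X_\Sigma=\PP^n$. So there is no ``paper's own proof'' to compare against; your proposal is a sketch of how one might attack an open problem.

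That said, your strategy is broadly the one that was later carried out (in \cite{CKM} and subsequent work): recognize $I_{X_\Sigma}$ as a sum of $\CC^*$-residues on the quasimap graph space, then verify the recursion/polynomiality criteria that characterize points of $\mathfrak L_{X_\Sigma}$. This is \emph{not} the method the paper uses even for $\PP^n$. There, the proof goes through an explicit $\CC^*$-equivariant birational \emph{morphism} $\Phi:\overline{M}_{0,k}(\PP^n,d;\PP^1)\to\cT_{0,k}(\PP^n,d;\PP^1)$ and a ``correspondence of residues'' argument: since $\Phi$ is birational between smooth spaces, it pushes the fundamental class to the fundamental class, and one checks that only $F_0$ maps to $\hat F_0$. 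The authors explicitly remark that this contraction map does not exist for general $X_\Sigma$, which is why the full conjecture is left open.

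Your Fano argument contains a genuine gap. You assert that $\int_\beta c_1(T_{X_\Sigma})\geq 2$ for every nonzero effective $\beta$, but this fails for Fano toric varieties of index~$1$ (e.g.\ the blow-up of $\PP^2$ at a torus-fixed point, where the exceptional curve $E$ has $-K\cdot E=1$). In that case the $k=0$ summand for $\beta=[E]$ contributes at order $z^{-1}$, so your truncation argument $I=1+{\bf t}/z+O(z^{-2})$ breaks down. Whether the conjectured equality $I=J$ nonetheless holds for all Fano $X_\Sigma$ (without any mirror transformation) requires a more delicate cancellation, or a different argument altogether; the $z^{-1}$ asymptotics alone do not suffice.
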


We plan to address this conjecture elsewhere. Here we only note the following result as supporting evidence.

\begin{Thm}\label{proj space} $I_{\PP^n}=J_{\PP^n}$.
\end{Thm}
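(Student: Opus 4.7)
My plan is to prove Theorem \ref{proj space} via $\CC^*$-equivariant localization on the quasimap graph spaces, leveraging the fact that for $X_\Sigma = \PP^n$ both $\cT_{0,k}(\PP^n,d;\PP^1)$ and $\cT_{0,k+1}(\PP^n,d)$ are smooth Deligne-Mumford stacks (by the Example about products of projective spaces), so that their virtual classes coincide with the fundamental classes.

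The first step is to reformulate both $I_{\PP^n}$ and $J_{\PP^n}$ as sums of $\CC^*$-equivariant residues on graph spaces. By Lemma \ref{residue}, each $k \geq 1$ term of $I_{\PP^n}$ equals the pushforward via $\hat{ev}_1$ to $\PP^n$ of the equivariant residue at $\hat F_0 \cong \cT_{0,k+1}(\PP^n,d)$ of $\prod_{j=2}^{k+1}\hat{ev}_j^*({\bf t})$ on $\cT_{0,k}(\PP^n,d;\PP^1)$, while the $k = 0$, $\beta \neq 0$ part is by construction the residue at $\hat F_0$ in $\cT_{0,0}(\PP^n,d;\PP^1)$. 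Similarly, standard $\CC^*$-localization on the stable map graph space $\overline{M}_{0,k}(\PP^1\times\PP^n,(1,d))$ expresses $J_{\PP^n}$ as a sum of equivariant residues at $F_0 \cong \overline{M}_{0,k+1}(\PP^n,d)$.

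Next I plan to construct a birational, $\CC^*$-equivariant morphism
\begin{equation*}
c\colon \overline{M}_{0,k}(\PP^1\times\PP^n,(1,d)) \lra \cT_{0,k}(\PP^n,d;\PP^1)
\end{equation*}
which collapses those rational tails of a stable map that violate the quasimap stability condition, absorbing them as base-points via Cox's functorial description of $\PP^n$, essentially as in the proof of completeness (Lemma \ref{lemmaB}). Since the target is smooth, a virtual pushforward argument should give $c_*[\overline{M}]^{\mathrm{vir}} = [\cT]$. Combining this with the projection formula and applying $\CC^*$-localization on both graph spaces, the equality $I_{\PP^n} = J_{\PP^n}$ reduces to comparing the contributions from the $\CC^*$-fixed loci: the morphism $c$ should identify $F_0$ with $\hat F_0$ with compatible virtual normal bundles (both having equivariant Euler class $z(z-\psi)$), and it should match the non-distinguished fixed loci pairwise.

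The main obstacle will be this matching of non-distinguished fixed loci: these parametrize splittings of the curve class $d$ between $0$ and $\infty \in \PP^1$, with trees of rational curves attached at the two poles. Verifying that the equivariant Euler classes of the virtual normal bundles agree under $c$ requires evaluating the contribution of the Euler sequence on both sides and careful combinatorial bookkeeping with degree splittings. For $\PP^n$, however, the identification of stable toric quasimaps with stable quotients noted in the Remark following Lemma \ref{qmap2} should allow this matching to be imported from the corresponding calculation for stable quotients carried out in \cite{MOP}.
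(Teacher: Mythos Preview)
Your setup is sound: the birational $\CC^*$-equivariant contraction map $c:\overline{M}_{0,k}(\PP^n,d;\PP^1)\to\cT_{0,k}(\PP^n,d;\PP^1)$ exists, both spaces are smooth of the same dimension (so the fundamental classes push forward correctly with no ``virtual pushforward'' machinery needed), and both $I$ and $J$ are expressed as residues at $F_0$ and $\hat{F_0}$ respectively. But the localization step, as you describe it, is where the plan goes wrong.

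You propose to match the fixed loci pairwise with agreeing Euler classes of normal bundles. This fails already at the distinguished locus when $k=0$: there $F_0\cong\overline{M}_{0,1}(\PP^n,d)$ has normal bundle Euler class $z(z-\psi)$, while $\hat{F_0}\cong\PP^n$ has normal bundle Euler class $\prod_{j=1}^{d}(H+jz)^{n+1}$; the map $c|_{F_0}$ is the evaluation map, not an isomorphism. More generally, the non-distinguished fixed loci are not in bijection under $c$ (the contraction collapses rational tails, so several stable-map fixed loci can land in one quasimap fixed locus), and their normal bundles certainly do not agree individually. Your proposed fallback of importing the combinatorics from \cite{MOP} would amount to quoting their result rather than giving an independent proof.

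The paper's argument avoids all of this. The correspondence of residues says that for a proper birational equivariant map between smooth spaces, the residue at a target fixed component $G$ equals the sum, over source fixed components $F$ mapping into $G$, of the pushforwards of their residues. The key observation is then simply that $F_0$ is the \emph{only} fixed component of $\overline{M}_{0,k}(\PP^n,d;\PP^1)$ mapped by $c$ into $\hat{F_0}$. Once that is checked, the equality of residues at $F_0$ and $\hat{F_0}$ (after pushforward to $\PP^n$) is immediate, and no analysis of the other fixed loci or their normal bundles is required.
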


\begin{proof} The argument is an extension of the one given in \cite{Bertram} for small $J$ and $I$-functions of
projective spaces. We write $d$ for the curve class $\beta=d[\mathrm{line}]$.
The moduli spaces $\overline{M}_{0,k}(\PP^n,d;\PP^1)$ and $\cT_{0,k}(\PP^n,d;\PP^1)$ are
smooth, of the same (expected) dimension. Furthermore, there is a $\CC^*$-equivariant birational {\it regular} map
$$\Phi : \overline{M}_{0,k}(\PP^n,d;\PP^1)\lra \cT_{0,k}(\PP^n,d;\PP^1)$$
such that 
\begin{enumerate}
\item the only fixed component mapped by $\Phi$ to $\hat{F_0}$ is $F_0$.

\item when $k=0$, the restriction $\Phi |_{F_0}$ coincides with the evaluation map $ev_1:\overline{M}_{0,1}(\PP^n,d)\ra\PP^n$.

\item when $k\geq 1$, $\Phi |_{F_0}$ is birational onto $\hat{F_0}$ and $ev_j =\hat{ev_j}\circ \Phi |_{F_0}$ for $j=1,\dots ,k+1$.

\end{enumerate}
(For $k=0$ the map $\Phi$ was constructed in \cite{Giv3}; for the general case, see \cite{MOP}.)

Our statement is now a consequence of the localization theorem (in the form of
``correspondence of residues"), in view of birationality of $\Phi$ and
the expressions of $I$ and $J$ in terms of residues.
\end{proof}
\begin{Rmk} 
$(a)$ The above theorem is a special case of the main result of \cite{MOP}. However, our proof is different.

$(b)$ The theorem and its proof can be immediately extended to the case of products of projective spaces.
However, for a general $X_\Sigma$ the contraction map $\Phi$ does not exist, and even if it did, it is not
obvious that the push-forward would preserve the virtual classes. Hence the argument does not
extend immediately
to prove the Fano case of conjecture \ref{I=J}.
\end{Rmk}

\end{document}